\tikzset{>=stealth}
\newcommand{\norm}[1]{\lVert #1 \rVert}
\newtheorem{remark}{Remark}[section]
\newtheorem{theorem}{Theorem}[section]
\newtheorem{lemma}[theorem]{Lemma}
\newtheorem{corollary}[theorem]{Corollary}
\numberwithin{equation}{section}
\begin{document}
\captionsetup[figure]{labelfont={bf},labelformat={default},labelsep=period,name={Fig.}}

\title{Pointwise A Posteriori Error Estimators for Multiple and Clustered Eigenvalue Computations}\date{}
\author{Zhenglei Li}
\author{Qigang Liang}
\author[1,*]{Xuejun Xu}
\affil[1]{\small School of Mathematical Science, Tongji University, Shanghai 200092, China and Key Laboratory of Intelligent Computing and Applications (Tongji University), Ministry of Education (2211186@tongji.edu.cn, qigang$\_$liang@tongji.edu.cn, xuxj@tongji.edu.cn)}

\maketitle

{\bf{Abstract}:}\ \ In this work, we propose an a pointwise a posteriori error estimator for conforming finite element approximations of eigenfunctions corresponding to multiple and clustered eigenvalues of elliptic operators. It is proven that the pointwise a posteriori error estimator is reliable and efficient, up to some logarithmic factors of the mesh size. The constants involved in the reliability and efficiency are independent of the gaps among the targeted eigenvalues, the mesh size and the number of mesh level. Specially, we obtain a by-product that edge residuals dominate the a posteriori error in the sense of $L^{\infty}$-norm when the linear element is used. With the aid of the weighted Sobolev stability of the $L^2$-projection, we also propose a new method to prove the reliability of the a posteriori error estimator for higher order finite elements. A key ingredient in the a posteriori error analysis is some new estimates for regularized derivative Green's functions. Some numerical experiments verify our theoretical results.

{\bf{Keywords}:}\ \ Elliptic eigenvalue problems, multiple and clustered eigenvalues, maximum norm, finite element methods, a posteriori error estimator \hspace*{2pt}

\section{Introduction} 
This paper considers an a posteriori error estimator for conforming finite element approximations of eigenfunctions corresponding to multiple and clustered eigenvalues of elliptic operators with adaptive finite element methods (AFEMs) in the sense of $L^{\infty}$-norm.   When solving elliptic partial differential equations, standard finite element methods on quasi-uniform meshes often exhibit suboptimal performance due to local singularities in the exact solution, which can significantly degrade numerical accuracy. To address this limitation, adaptive finite element methods (AFEMs) were introduced in \cite{MR483395}. Over the past several decades, AFEMs have been the subject of extensive theoretical research and have been successfully implemented in various practical applications, as documented in \cite{MR745088,AINSWORTH19971,MR2194587,i28} and the references therein. In the context of elliptic eigenvalue problems, a posteriori error estimators have been rigorously analyzed in \cite{i34,i32,i35,MR2476558}, where both the reliability and efficiency of these estimators are established. The convergence of AFEMs and the quasi-optimal convergence rate are obtained in \cite{i31,MR2430983,MR2747809}, the proofs in which are restricted to simple eigenvalues. It is noteworthy that the authors in \cite{MR3407250} first derive the a posteriori error estimator for multiple eigenvalues and prove the convergence of AFEMs. Subsequently, the results are extended to clustered eigenvalues in \cite{i35,MR3347459}.

\par  The a posteriori error estimates discussed above are measured with respect to the energy norm, ensuring convergence of the discrete solution to the exact solution in an integral-averaged sense. However, it should be noted that in many practical applications, pointwise error estimate is of considerable importance (see, for example, \cite{MR1839794, MR3549870, MR1079028} and the references therein). Consequently, the development and rigorous analysis of pointwise a posteriori error estimators in the $L^{\infty}$-norm attract  significant interest, which often poses substantial theoretical challenges.  In recent years, significant progress has been made in the a priori error analysis in the $L^{\infty}$-norm for finite element approximations of elliptic source problems. A comprehensive overview of these developments can be found in \cite{i2,i3,i4,i7,i13,MR4435940} and references therein. Regarding the a posteriori error estimation, the author in \cite{1} introduces pointwise a posteriori error estimators over general polygonal domains in $\mathbb{R}^{2}$ and establishes their equivalence to the a priori error, measured by the $L^{\infty}$-norm. Subsequently, these findings have been extended to three-dimensional settings and non-conforming finite element methods in \cite{2}. A crucial step in the proof of the reliability presented in \cite{1} involves deriving an a priori $W^{2,1}$-estimate for regularized Green’s functions. In the context of pointwise a priori error analysis for eigenvalue problems, the study \cite{i36} achieves a sharp convergence rate in convex domains, under the assumption that $u\in W^{2,\infty}(\Omega)$. For pointwise a posteriori error of symmetric elliptic eigenvalue problems, we recently (see \cite{simple}) design and analyze a pointwise a posteriori error estimator for eigenfunctions corresponding to simple eigenvalues. In practical computations, the pointwise a posteriori error estimator for eigenfunctions corresponding to simple eigenvalues can not capture well the singularities of eigenfunctions corresponding to multiple and clustered eigenvalues (see subsection 5.3). Therefore, it is significant to design and analyze the a posterior error estimator for eigenfunctions corresponding to multiple and clustered eigenvalues.

\par  In this work, we introduce a pointwise a posteriori error estimator and establish its reliability and efficiency with respect to the $L^{\infty}$-norm for eigenfunctions corresponding to multiple and clustered eigenvalues of symmetric elliptic operators. It is noted that analyzing the reliability and efficiency of the pointwise a posteriori error estimator is nontrivial. One first requires to analyze the relationship between constants (involved in the efficiency and reliability) and the gaps among targeted eigenvalues. Hence, it is necessary to estimate the angle between the exact invariant subspace and the discrete invariant subspace in the sense of $L^{\infty}$-norm, which poses more difficulties. The second difficulty arises from the distinction between eigenvalue problems and source problems, the absence of Galerkin orthogonality in the finite element discretization of eigenvalue problems, leads to a $L^{2}$-term, which must be carefully bounded. Third, the extension to higher-order methods needs a more careful estimate for element residuals in the a posteriori error estimator for multiple and clustered eigenvalues.

\par  To overcome these difficulties and obtain the reliability and efficiency of the a posteriori error estimator for eigenfunctions corresponding to multiple and clustered eigenvalues in the sense of $L^{\infty}$-norm, we take advantage of some mathematical tools and skills. A key ingredient is to derive some a priori estimates for the Ritz projection of regularized derivative Green's functions.  Moreover, with the help of the weighted Sobolev stability of the $L^2$-projection, we provide a new method to extend the a posteriori error  analysis to higher-order finite elements, which is different from \cite{MR3532806}. Specifically, we obtain the following result:
\begin{equation*}
          ||e_j||_{L^{\infty}(\Omega)}\lesssim \frac{|\ln{\underline{h}_l}|^2}{|\ln{\overline{h}_l}|} \frac{\lambda_{n+L}}{\lambda_{n+1}}\eta_l
\end{equation*}
for any $j\in J$ and
\begin{equation*}
        \eta_l \lesssim \sqrt{L}\sum_{j\in J}||e_j||_{L^{\infty}(\Omega)},
\end{equation*}
where $e_{j}=u_{j}-\Lambda_{l}u_{j}$ with $u_{j}$ being the $j$th eigenfunction and $\Lambda_{l}$ being defined in \eqref{deflambdal}, $\eta_{l}$ is the pointwise a posterior error estimator defined in \eqref{def eta}, $\lambda_{j}$ is the $j$th eigenvalue of the eigenvalue problem, $J=\{n+1,n+2,...,n+L\}$ and $\overline{h}_{l}$ ($\underline{h}_l$) is the maximum (minimum) of the mesh-size. Throughout this paper, $a\lesssim(\gtrsim)\ b$ means $a\leq(\geq)\ Cb$, where the letter $C$ is generic positive constant independent of the mesh size and the gaps among eigenvalues in a cluster, possibly different at each occurrence. Specially, as a by-product, we prove that edge residuals dominate the a posterori error in the sense of $L^{\infty}$-norm when the linear element is used.

\par This paper is organized as follows. In section 2, we introduce the model problem and some mathematical notations. Then we review some useful lemmas in \cite{MR3347459} and give a lower bound for the a priori error. In section 3, we derive some new a priori estimates for  regularized Green's functions and derivative Green's functions. In section 4, we  give the main result of this paper, namely, the reliability and efficiency of the pointwise a posteriori error estimator.  Some numerical experiments are given to verify our theoretical findings in section 5 and the conclusion is presented in section 6.

\section{Preliminaries}

\subsection{Triangulations and finite element spaces}
Let $\Omega\subset \mathbb{R}^2$ be a polygonal domain without restrictions on the size of the interior angels. Let $r_0=\frac{\pi}{\theta_0}$, where $\theta_0$ is the largest angle among the interior corners of $\Omega$. Throughout this paper, for any subset $\widetilde{\Omega} \subset \Omega$, we use the standard notations for the Sobolev spaces $W^{m,p}(\widetilde{\Omega})$ ($H^m(\widetilde{\Omega})$) and $W_{0}^{m,p}(\widetilde{\Omega})$($H_0^m(\widetilde{\Omega})$) (see \cite{MR450957}). Moreover, we denote by $L^p(\widetilde{\Omega})$ and $L^p(\Gamma)$ the usual Lebesgue spaces equipped with the standard norms $\Vert \cdot\Vert_{L^p(\widetilde{\Omega})}$ and  $\Vert \cdot\Vert_{L^p(\Gamma)}$, where  $\Gamma \subset \overline{\Omega}$ is a Lipschitz curve. 
\par We use $\{ {\mathcal{T}_l}\}_{l\in\mathbb{N}}$ to denote  a shape-regular family of nested conforming meshes over $\Omega$, i.e., there exists a constant $\gamma$ such that 
\begin{equation*}
    \frac{h_l|_T}{\rho_T}\leq \gamma,
\end{equation*}
where the piecewise constant mesh-size function $h_l := h_{\mathcal{T}_l}$ is defined by $h_l|_T := h_T := \text{meas}(T)^{\frac{1}{2}}$ for any element $T \in \mathcal{T}_l$ and $\rho_T$ is the diameter of the biggest ball contained in $T$. For simplicity, we assume that each element $T\in \mathcal{T}_l$ is a closed set. 
We set $ \overline{h}_l:=\max_{T \in \mathcal{T}_l} h_T$ and $ \underline{h}_l:=\min_{T \in \mathcal{T}_l} h_T$.

\par Now we introduce the jump operator. Given an interior edge $E$ shared by two elements $T_{+}$ and $T_{-}$, i.e., $E=\partial T_+ \cap \partial T_-$,
 for any piecewise function $v$, we define  
 \begin{equation*}
     [\![v]\!]_E=(v|_{K_+})|_E-(v|_{K_-})|_E
 \end{equation*}
 and
 \begin{equation*}
     [\![\nabla v]\!]_E=(\nabla v|_{K_+})|_E-(\nabla v|_{K_-})|_E.
 \end{equation*}
 By convention, if $E$ is a boundary edge, we set
\begin{equation*}
    [\![v]\!]_E=v|_{E}, \ \ \ \ [\![\nabla v]\!]_E=\nabla v |_{E}.
\end{equation*}
For simplicity, we denote by 
 \begin{align*}
    [\![\frac{\partial v}{\partial {n} 
 }]\!]_E &=[\![\nabla v]\!]_E \cdot {n}_E,
 \end{align*}
where ${n}_E$ is the unit outer normal vector of $E$. 
The choice of the particular normal is arbitrary, but is considered to be fixed once and for all. In subsequent sections, we will drop the subscript $E$ without causing confusion. 

\par We denote by $P_k(\mathcal{T}_l)$ the set of piecewise polynomial functions of degree $\leq k$ with respect to $\mathcal{T}_l$. The conforming finite element space $V_l^k$ is defined as
\begin{equation*}
    V_l^{k}=P_k(\mathcal{T}_l)\cap H_0^1(\Omega).
\end{equation*}

\subsection{Eigenvalue problem and its discretization}
We consider the model problem
\begin{align}\label{Laplace}
	\begin{cases}
		-\Delta u=\lambda u \ \ &\text{in} \ \Omega,\\
		u=0 \ \ &\text{on} \  \partial \Omega.
	\end{cases}
\end{align}
For notational convenience, we denote by 
\begin{equation*}
	(\phi ,\psi)_{\widetilde{\Omega}}:=\int_{\widetilde{\Omega}} \phi \psi\ \text{d}x,\ \ \ \ \langle \phi ,\psi \rangle_\Gamma:=\int_\Gamma\phi \psi\ \text{d}s.
\end{equation*}
If $\widetilde{\Omega}=\Omega$, we drop the subscript $\widetilde{\Omega}$ in $(\phi ,\psi)_{\widetilde{\Omega}}$. The variational form of \eqref{Laplace} is to find $(\lambda,u)\in \mathbb{R} \times V (:=H_{0}^{1}(\Omega)) $ such that
\begin{equation}\label{Laplace_variation}
  a(u,v)=\lambda(u,v)\ \ \ \ \forall\ v \in V,\\
\end{equation}
where 
\begin{equation*}
	a(v ,w)=\int_\Omega \nabla v \cdot \nabla w \ dx \ \ \ \ \forall\ v ,w \in V.
\end{equation*}
We use the $k$th order  Lagrangian finite element to discrete problem \eqref{Laplace}. The finite element discretization is to seek  discrete eigenpair $(\lambda_l,u_l)\in \mathbb{R}\times V_l(:=V_l^k)$ satisfies
\begin{equation}\label{def_u_h}
a(u_l,v_l)=\lambda_l(u_l,v_l) \ \ \ \ \forall \ v_l\in V_l.
\end{equation}
It is known that \eqref{Laplace} has countably many positive eigenvalues, with the only accumulation point $+\infty$. Suppose that the eigenvalues of \eqref{Laplace} and the discrete eigenvalues of \eqref{def_u_h} are enumerated  as 
\begin{equation*}
    0 < \lambda_1 \leq \lambda_2 \leq \dots \quad \text{and} \quad 0 < \lambda_{l,1} \leq \dots \leq \lambda_{l,\dim(V_{l})}.
\end{equation*}
Let $\{u_1, u_2, u_3, \dots\}$ $(\{u_{l,1}, u_{l,2}, \dots, u_{l,{\dim({V_{l}})}}\})$ denote some $L^2$-orthonormal bases of corresponding (discrete) eigenfunctions. For a cluster of eigenvalues $\lambda_{n+1}, \dots, \lambda_{n+L}$ of length $L \in \mathbb{N}$, we define the index set 
$$
J := \{ n+1, \dots, n+L \}
$$
and the spaces 
$$
W := \operatorname{span}\{ u_j \}_{j \in J} \quad \text{and} \quad W_{l} := \operatorname{span}\{ u_{l,j} \}_{j \in J}.
$$
We assume that  there holds a continuous gap condition, i.e., 
\begin{equation*}
    \lambda_n<\lambda_{n+1}\leq \lambda_{n+2}\leq ...\leq \lambda_{n+L}<\lambda_{n+L+1}
\end{equation*}
(with the convention $\lambda_0:= 0$).

\subsection{Some basic estimates for eigenvalue clusters}
We first introduce some operators. Define the Ritz  projection $R_l$: $V\rightarrow V_l$ such that
 \begin{equation}\label{def_R_l}
     a(R_lv,w_l)=a(v,w_l)\ \ \ \ \forall \ w_l\in V_l.
 \end{equation}
We use  $Q_l:L^2(\Omega)\rightarrow V_l$ to denote the $L^2$-projection defined by
\begin{equation*}
    (Q_lv,w_l)=(v,w_l) \ \ \ \ \forall\  w_l\in V_l
\end{equation*}
and $\Pi_l^k :C^0(\overline{\Omega})\rightarrow V_l^k$ to denote the usual Lagrange interpolation operator. Moreover, we denote by $P_l$ the $L^2$-projection to $W_l$. We also denote by $\Lambda_l$ the composite of $P_l$ and $R_l$, namely 
\begin{equation}\label{deflambdal}
    \Lambda_l =P_l\circ R_l.
\end{equation}

For any index $j\in J$, the function $\Lambda_l u_j\in W_l$ is regarded as the approximation of $u_j$ (in subsection 4.2, we explain that it is a reasonable approximation). Note that $\Lambda_l u_j$ is not necessarily an eigenfunction. 
 \par Now we shall review some useful results of eigenvalues clusters in \cite{MR3259027} and \cite{MR3347459}. Here we omit the details.
\begin{lemma}\label{lemma u-Lambda_l u}
Assume that there exists a separation bound
\begin{equation}\label{eq:H2}
    M_J=\sup_{l\in \mathbb{N}}\max_{i\in\{ 1,2,3,...,\text{dim}(V_l)\backslash J \}}\max_{j\in J}\frac{\lambda_j}{|\lambda_{l,i}-\lambda_j|}<\infty.
\end{equation}
Any eigenfunction $u_j\in W$ $(j\in J)$ with $\norm{u_j}_{L^2(\Omega)}=1$ satisfies 
    \begin{equation}\label{u-Lambda_l u}
        \norm{u_j-\Lambda_l u_j}_{L^2(\Omega)}\leq (1+M_J) \norm{u_j-R_lu_j}_{L^2(\Omega)}.
    \end{equation}
\end{lemma}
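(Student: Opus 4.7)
The plan is to prove the bound by splitting
\begin{equation*}
u_j - \Lambda_l u_j = (u_j - R_l u_j) + (R_l u_j - P_l R_l u_j)
\end{equation*}
and applying the triangle inequality, so that the task reduces to showing
\begin{equation*}
\norm{R_l u_j - P_l R_l u_j}_{L^2(\Omega)} \leq M_J \norm{u_j - R_l u_j}_{L^2(\Omega)}.
\end{equation*}
The second term is the $L^2$-projection of $R_l u_j \in V_l$ onto the orthogonal complement of $W_l$ inside $V_l$, so I would expand $R_l u_j$ in the discrete $L^2$-orthonormal eigenbasis $\{u_{l,i}\}$ and use Parseval to write
\begin{equation*}
\norm{R_l u_j - P_l R_l u_j}_{L^2(\Omega)}^2 = \sum_{i\notin J}\bigl(R_l u_j, u_{l,i}\bigr)^2.
\end{equation*}

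The key algebraic step is to extract a factor $\lambda_j/(\lambda_{l,i}-\lambda_j)$ from each inner product. To do this, I combine the continuous eigenrelation $a(u_j,u_{l,i})=\lambda_j(u_j,u_{l,i})$ with the discrete one $a(u_{l,i},R_l u_j)=\lambda_{l,i}(u_{l,i},R_l u_j)$, and invoke the Galerkin identity $a(R_l u_j,u_{l,i})=a(u_j,u_{l,i})$ coming from the definition \eqref{def_R_l} of the Ritz projection. Subtracting $\lambda_j(R_l u_j,u_{l,i})$ from both sides of the equality $\lambda_{l,i}(R_l u_j,u_{l,i})=\lambda_j(u_j,u_{l,i})$ then yields
\begin{equation*}
(\lambda_{l,i}-\lambda_j)(R_l u_j,u_{l,i}) = \lambda_j (u_j - R_l u_j, u_{l,i}),
\end{equation*}
which is the identity I want.

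From here the proof finishes quickly: since $i\notin J$, the separation bound \eqref{eq:H2} gives $|\lambda_j/(\lambda_{l,i}-\lambda_j)|\leq M_J$, so squaring the identity and summing over $i\notin J$ followed by Bessel's inequality for $u_j - R_l u_j$ against the orthonormal family $\{u_{l,i}\}_{i\notin J}$ produces
\begin{equation*}
\sum_{i\notin J}\bigl(R_l u_j, u_{l,i}\bigr)^2 \leq M_J^2 \sum_{i\notin J}\bigl(u_j-R_l u_j,u_{l,i}\bigr)^2 \leq M_J^2\,\norm{u_j-R_l u_j}_{L^2(\Omega)}^2,
\end{equation*}
and the result follows by combining with the triangle inequality.

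The main obstacle is really just the bookkeeping at the key identity step: one has to be careful that $R_l u_j$ lies in $V_l$ so that both Galerkin orthogonality and the expansion in the discrete basis are legitimate, and that the sum is restricted to $i\notin J$ so the factor $\lambda_{l,i}-\lambda_j$ stays away from zero via $M_J$. Once that identity is in hand, the rest is a routine application of Bessel's inequality.
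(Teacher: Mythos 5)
Your proof is correct: the decomposition $u_j-\Lambda_l u_j=(u_j-R_lu_j)+(I-P_l)R_lu_j$, the identity $(\lambda_{l,i}-\lambda_j)(R_lu_j,u_{l,i})=\lambda_j(u_j-R_lu_j,u_{l,i})$ obtained from the Ritz--Galerkin property together with the continuous and discrete eigenrelations, and the Bessel/Parseval step with the bound $\lambda_j/|\lambda_{l,i}-\lambda_j|\le M_J$ for $i\notin J$ all hold as stated. The paper itself omits the proof and cites the cluster-approximation literature, but your argument is essentially the standard one given there, so no discrepancy arises.
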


 \begin{lemma}\label{lemma Lambda_l}
     Any eigenpair $(\lambda_j,u_j)\in \mathbb{R}^+\times W$ $(j\in J)$ of \eqref{Laplace} satisfies 
     \begin{equation}\label{Lambda_l_v_l}
      a(\Lambda_lu_j,v_l)=\lambda_j(P_lu_j,v_l), \ \ \ \ \forall v_l\in V_l.
     \end{equation}
 \end{lemma}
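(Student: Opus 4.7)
The plan is to exploit the fact that the discrete eigenfunctions $\{u_{l,i}\}$, being eigenfunctions of a symmetric problem, are \emph{simultaneously} orthogonal with respect to $(\cdot,\cdot)$ and $a(\cdot,\cdot)$. This upgrades the $L^2$-orthogonal decomposition $V_l = W_l \oplus W_l^{\perp}$ induced by $P_l$ to an $a$-orthogonal decomposition as well. Once this is in place, a short chain of identities comparing $\Lambda_l u_j = P_l R_l u_j$ to $R_l u_j$ will yield the claim.

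First I would expand any $v_l \in V_l$ in the discrete eigenbasis and verify that $a(w, z) = 0$ whenever $w \in W_l$ and $z \in (I-P_l)V_l$. Consequently, since $\Lambda_l u_j \in W_l$,
\begin{equation*}
a(\Lambda_l u_j, v_l) = a(\Lambda_l u_j, P_l v_l),
\end{equation*}
because the component $(I-P_l)v_l$ is $a$-orthogonal to $W_l$. Next, decomposing $R_l u_j = \Lambda_l u_j + (I - P_l)R_l u_j$ and using the same $a$-orthogonality against $P_l v_l \in W_l$,
\begin{equation*}
a(R_l u_j, P_l v_l) = a(\Lambda_l u_j, P_l v_l).
\end{equation*}

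Finally, I would invoke the definition \eqref{def_R_l} of the Ritz projection together with the eigenvalue equation \eqref{Laplace_variation} (legitimate since $P_l v_l \in V_l \subset V$), and then the self-adjointness of $P_l$ in $L^2$:
\begin{equation*}
a(R_l u_j, P_l v_l) = a(u_j, P_l v_l) = \lambda_j (u_j, P_l v_l) = \lambda_j (P_l u_j, v_l).
\end{equation*}
Chaining the three displays gives $a(\Lambda_l u_j, v_l) = \lambda_j (P_l u_j, v_l)$, as required. The main (mild) obstacle is justifying the $a$-orthogonality of $W_l$ and its $L^2$-complement in $V_l$; this rests only on the symmetric spectral decomposition of the discrete operator and is independent of how tight the cluster is, which is crucial for constants in later sections not to depend on the eigenvalue gaps.
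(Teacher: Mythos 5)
Your argument is correct: the simultaneous $(\cdot,\cdot)$- and $a(\cdot,\cdot)$-orthogonality of the $L^2$-orthonormal discrete eigenbasis makes the splitting $V_l=W_l\oplus (I-P_l)V_l$ $a$-orthogonal as well, and then the Ritz property \eqref{def_R_l}, the eigenvalue equation \eqref{Laplace_variation} tested with $P_lv_l\in V_l\subset V$, and the $L^2$-self-adjointness of $P_l$ yield \eqref{Lambda_l_v_l}. Note that the paper does not prove this lemma itself (it is quoted from the cited works on eigenvalue clusters with the details omitted); your proof is essentially the standard argument there, which can be condensed into one computation by expanding $\Lambda_lu_j=\sum_{i\in J}(R_lu_j,u_{l,i})\,u_{l,i}$ and using $a(u_{l,i},v_l)=\lambda_{l,i}(u_{l,i},v_l)$ together with $\lambda_{l,i}(R_lu_j,u_{l,i})=a(R_lu_j,u_{l,i})=a(u_j,u_{l,i})=\lambda_j(u_j,u_{l,i})$.
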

Besides the above two lemmas, we shall derive an a priori lower bound for $\norm{u_j-\Lambda_lu_j}_{L^{\infty}(\Omega)} (j\in J)$, which will be used in the subsequent a posteriori error analysis.

\begin{lemma}[Lower bound]\label{lemma lower bound}
Let $(\lambda_j,u_j) \in \mathbb{R}^+\times W $ $(j\in J)$ be an eigenpair of \eqref{Laplace}. If the mesh-size $\overline{h}_l$ is small enough, we have the lower bound
 \begin{equation}\label{lower bound}
        \inf_{v_l\in V_l (\mathcal{T}_l)}\norm{u_j-v_l}_{L^{\infty}(\Omega)}\gtrsim \underline{h}_l^{2[\frac{k+1}{2}]},
    \end{equation}
 \end{lemma}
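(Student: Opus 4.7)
The plan is to establish the lower bound by a localized bump-function test that exploits the eigenvalue equation $-\Delta u_j=\lambda_j u_j$ together with the piecewise polynomial structure of $V_l$. The guiding observation is that for $v_l\in V_l$, the iterated Laplacian $\Delta^m v_l$ restricted to any single element is either identically zero or a constant, whereas $\Delta^m u_j = (-\lambda_j)^m u_j$ inherits the full non-degeneracy of $u_j$.

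First I would localize. Since $u_j$ is a nontrivial Dirichlet eigenfunction, it is real-analytic in the interior of $\Omega$, so there is a fixed ball $B_\rho(x_0)\Subset\Omega$ on which $|u_j|$ (and, when needed, some directional derivative $|\partial_e u_j|$) is bounded below by a positive constant. For $\overline{h}_l$ small enough, $B_\rho(x_0)$ contains at least one mesh element $T^*\in\mathcal{T}_l$, and $h_{T^*}\ge\underline{h}_l$, so it suffices to prove $\|u_j-v_l\|_{L^\infty(\Omega)}\gtrsim h_{T^*}^{2m}$ with $m:=[(k+1)/2]$.

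Next I would run the test. Pick a bump function $\phi\in C_c^\infty(T^*)$ supported in an inscribed ball of radius $\sim h_{T^*}$; in the even-$k$ case take $\phi=\partial_e\psi$ for a non-negative bump $\psi$ so that $\int_{T^*}\phi=0$. Integrating by parts $m$ times (all boundary terms vanish by the compact support of $\phi$) yields
\[
\int_{T^*}(u_j-v_l)\,\Delta^m\phi \;=\;(-\lambda_j)^m\int_{T^*}u_j\,\phi \;-\;\int_{T^*}\Delta^m v_l\cdot\phi,
\]
and the last term vanishes because $\Delta^m v_l|_{T^*}$ is either identically zero (odd $k$) or a constant multiplied by $\int\phi=0$ (even $k$). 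The right-hand side is bounded below in modulus by $c\,\lambda_j^m\,h_{T^*}^{2}$, using the non-degeneracy of $u_j$ (respectively of $\partial_e u_j$, after one further integration by parts) on $B_\rho(x_0)$; the left-hand side is bounded above by $\|u_j-v_l\|_{L^\infty(\Omega)}\,\|\Delta^m\phi\|_{L^1(T^*)}\lesssim\|u_j-v_l\|_{L^\infty(\Omega)}\,h_{T^*}^{2-2m}$ via standard bump scaling. Combining and taking the infimum over $v_l\in V_l$ produces the desired lower bound.

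The main obstacle is the even-$k$ case: $\Delta^m v_l|_{T^*}$ is an uncontrolled mesh-dependent constant that does not drop out of the identity above. Forcing the bump to have vanishing mean annihilates it, but at the cost of transferring one derivative onto $u_j$, which is precisely why one must exploit the interior real analyticity of the eigenfunction to guarantee a non-vanishing directional derivative on the fixed ball $B_\rho(x_0)$.
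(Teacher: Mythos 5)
Your argument is correct in substance, but it takes a genuinely different route from the paper. The paper works at the maximizing point $z$ of $|u_j|$: it first invokes the $W^{2,p}$ a priori estimate \eqref{priori} and Sobolev embedding to show that $z$ stays a fixed distance from $\partial\Omega$, then writes $\lambda_j^{k_0}\norm{u_j}_{L^\infty(\Omega)}=|\Delta^{k_0}u_j(z)|\le |u_j-v|_{W^{2k_0,\infty}(T_z)}$ for any polynomial $v$ of degree $\le 2k_0-1$ (so that $D^{2k_0}v=0$), and converts the $W^{2k_0,\infty}$ seminorm into $h_{T_z}^{-2k_0}\norm{u_j-v}_{L^\infty(T_z)}$ via Lagrange interpolation and inverse estimates, absorbing the remainder $h_{T_z}|u_j|_{W^{2k_0+1,\infty}(T_z)}$ by interior regularity. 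You instead fix once and for all an interior ball of non-degeneracy of $u_j$ (or of $\partial_e u_j$), take an element $T^*$ inside it, and pair $u_j-v_l$ with $\Delta^m$ of a scaled bump; the iterated eigen-equation and the triviality of $\Delta^m v_l$ on $T^*$ then give the bound after an $L^1$-scaling of $\Delta^m\phi$. This is essentially a duality form of the same inverse estimate, but more elementary and self-contained: you avoid \eqref{priori}, the H{\"o}lder argument locating the max point away from the boundary, and the interpolation/inverse machinery, at the price of a parity case distinction; both arguments produce constants depending only on $u_j$, $\lambda_j$ and shape regularity, as allowed.

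Two bookkeeping points in your even-$k$ case. First, with $\phi=\partial_e\psi$ the correct scaling is $\norm{\Delta^m\phi}_{L^1(T^*)}\lesssim h_{T^*}^{1-2m}$, not $h_{T^*}^{2-2m}$, so your identity yields $\norm{u_j-v_l}_{L^\infty(\Omega)}\gtrsim h_{T^*}^{2m+1}=h_{T^*}^{k+1}$ rather than $h_{T^*}^{k}$. This does not endanger the lemma: the rounding in $2[\frac{k+1}{2}]$ must be upward (the paper's own proof needs $P_k\subset P_{2k_0-1}$), so the target exponent for even $k$ is $k+2$, which your corrected exponent $k+1$ implies for $\underline{h}_l\le 1$. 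Second, the vanishing-mean device is avoidable altogether: taking $m=\frac{k}{2}+1$ for even $k$ (i.e.\ $m=[\frac{k+1}{2}]$ with the paper's rounding) makes $\Delta^m v_l\equiv 0$ for every $v_l\in V_l$, and the plain non-negative bump then gives exactly the exponent $2m=2[\frac{k+1}{2}]$ of \eqref{lower bound}, uniformly in the parity of $k$.
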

 where $[c]$ represents the rounding of the number $c$. 
\begin{proof}
 Let $z\in T_z$ ($T_z\in \mathcal{T}_l$) satisfy $|u_j(z)|=\norm{u_j}_{L^\infty(\Omega)}$.
 It is known that the following estimate holds for $1<p<\frac{4}{3}$ (see \cite{6,5}): 
\begin{equation}\label{priori}
    \vert v \vert_{W^{2,p}(\Omega)}\leq C_p \Vert \Delta v \Vert_{L^p(\Omega)}  \ \ \ \ \forall \ v \in W^{2,p}(\Omega)\cap W_0^{1,p}(\Omega).
\end{equation}
On using the Sobolev embedding theorem, in conjunction  with \eqref{priori}, we obtain
\begin{equation*}
    \norm{u_j}_{C^{0,\alpha_0}(\overline{\Omega})}\lesssim ||\lambda_j u_j||_{L^p(\Omega)}\lesssim 1
\end{equation*}
for some $\alpha_0>0$ and $1<p<\frac{4}{3}$.
Thus we may infer that
\begin{equation*}
    d:=\text{dist}(z,\partial\Omega)\gtrsim \left(\frac{\norm{u_j}_{L^{\infty}(\Omega)}}{\lambda_j}\right)^{\frac{1}{\alpha_0}}\gtrsim 1.
\end{equation*}
Let $k_0:= [\frac{k+1}{2}]$. We know that $u$ is smooth enough in the interior of $\Omega$. Using the standard interpolation theory and inverse estimates, for any $v\in P_{2k_0-1} (\mathcal{T}_l)$ we may deduce
   \begin{align*}
     {\lambda_j^{k_0}}\norm{u_j}_{L^{\infty}(\Omega)}&=  {|\Delta^{k_0}u_j(z)|}\lesssim|u_j|_{W^{2{k_0},\infty}({T_z})}\\
      &=|u_j-v|_{W^{2{k_0},\infty}({T_z})}\\
      &\leq |u_j-\Pi_l^{2{k_0}}u_j|_{W^{2{k_0},\infty}({T_z})}+|\Pi_l^{2{k_0}}u_j-v|_{W^{2{k_0},\infty}({T_z})}\\
      &\lesssim h_{T_z}|u_j|_{W^{2{k_0}+1,\infty}({T_z})}+h_{T_z}^{-2{k_0}}\norm{\Pi_l^{2{k_0}}u_j-v}_{L^{\infty}({T_z})}\\
      &\leq  h_{T_z}|u_j|_{W^{2{k_0}+1,\infty}({T_z})} +h_{T_z}^{-2{k_0}}(\norm{u_j-\Pi_l^{2{k_0}}u_j}_{L^{\infty}({T_z})}+\norm{u_j-v}_{L^{\infty}({T_z})})\\
      &\lesssim h_{T_z}|u_j|_{W^{2{k_0}+1,\infty}({T_z})}+h_{T_z}^{-2{k_0}}\norm{u_j-v}_{L^{\infty}({T_z})}.
    \end{align*}
It follows that 
\begin{equation*}
     \inf_{v_l\in P_{2k_0-1} (\mathcal{T}_l)} \norm{u_j-v}_{L^{\infty}({T_z})}\gtrsim h_{T_z}^{2k_0}-h_{T_z}^{2k_0+1}|u_j|_{W^{2{k_0}+1,\infty}({T_z})}
\end{equation*}
To drop the second term, we use the well-known interior estimate (see Theorem 5 in subsection 6.3.1 in \cite{MR2597943}) 
\begin{equation*}
    |u_j|_{H^{m}(U)}\lesssim 1
\end{equation*}
for any positive integer $m$ and $U\subset\subset \Omega$, where we use $U\subset\subset \Omega 
$ to denote $U$ is a subset of $\Omega $ satisfying $\text{dist}(\partial U, \partial\Omega)\gtrsim 1$. By the Sobolev embedding theorem and noting $d\gtrsim1$, we have
\begin{equation*}
    |u_j|_{W^{2{k_0}+1,\infty}({T_z})}\lesssim 1,
\end{equation*}
which leads to \eqref{lower bound} with a small enough $\overline{h}_l$.
\end{proof}

\section{Some a priori estimates}
In this subsection, we  introduce a  regularized (derivative) Green's function, and derive some useful a priori estimates, which will play an important role in the next section.  
\subsection{$W^{2,1}$-estimate for regularized Green's function}
\par Given a fixed point $x_0\in \overline{\Omega}$, define a smooth function $\delta_0$ satisfying (see \cite{1}):
\begin{equation}\label{prodelta1}
    \text{supp}\  \delta_0 \subset B(x_0,\rho),\ \ \int_{\Omega} \delta_0 \ \text{d} x=1,
\end{equation}
and
\begin{equation}\label{prodelta2}
     0\leq \delta_0 \lesssim \rho^{-2},\ \ \Vert  \delta_0 \Vert_{W^{s,\infty}(\Omega)}\lesssim \rho^{-s-2}, \ s=1,2,....
\end{equation}
where $\rho =h_{T}^\beta$ with $\beta\geq 1$ being an undetermined parameter and $T$ is an element containing $x_{0}$. We define the regularized Green's function $g_0$ such that
\begin{align}\label{defg}
    \begin{cases}
		-\Delta g_0=\delta_0\ \ \ \ &\text{in} \ \Omega,\\
		g_0=0\ \ \ \ &\text{on} \ \partial\Omega.       
    \end{cases}\ \  
\end{align}
For the following analysis, we review an $W^{2,1}$-estimate for $g_0$ for arbitrary polygonal domain (see \cite{simple})
\begin{equation}\label{smooth2}
      \vert g_0 \vert_{W^{2,1}(\Omega)}\lesssim \vert \ln\rho\vert.
\end{equation}

\subsection{Estimates for regularized derivative Green’s function}
\par For any element $T\in \mathcal{T}_l$, let $x_T\in T$. There exists $\delta_{1}\in C_0^{\infty}(T)$ (see \cite{construction2,construction1}) satisfying
\begin{equation}\label{prodT}
	 (\delta_1, v)_T=v(x_T) \ \ \forall\  v\in P_k(T),\  \Vert \delta_{1}\Vert_{W^{s,\infty}(\Omega)}\lesssim h_T^{-s-2},s=0,1,2,....
\end{equation} 
Define a regularized derivative Green's function $g_1$ scuh that
\begin{align}\label{def_g1}
	\begin{cases}
		-\Delta g_{1}=\nabla\delta_{1}\cdot {\nu},\ \  &\text{in} \ \Omega,\\
		g_{1}=0,\ \ &\text{on} \ \partial \Omega,
	\end{cases}
\end{align}
where ${\nu}=(\nu_1,\nu_2)$ is an arbitrary unit vector. We introduce the regularized derivative Green's function, which plays a crucial role in the subsequent a posteriori error analysis. We review a result about the gradient estimate in \cite{MR2141694}, which is beneficial to estimate the $W^{1,p}$-norm of $g_1$($g_0$).
\begin{lemma}\label{lemma gradient estimate}
    Given a Lipschitz domain $D$, let $U$ solve
    \begin{equation}\notag
    \begin{cases}
    -\Delta U=\text{div}F\ \ \ \ &x\in  D,\\
    U=0\ \ \ \ \ &x\in \partial D,
    \end{cases}
    \end{equation}
where $F\in (L^p(D))^{2}$. We have the following $W^{1,p}$-estimate 
\begin{equation}\label{gradient}
    \norm{U}_{W^{1,p}(D)}\lesssim \norm{F}_{L^{p}(D)},
\end{equation}
where $\frac{4}{3}- \varepsilon<p< 4+\varepsilon$ with $\varepsilon>0$ depending only on $D$.
\end{lemma}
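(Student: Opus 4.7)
The plan is to combine an elementary $L^2$-energy argument with a duality reduction to the conjugate exponent, and then appeal to the Jerison--Kenig/Dahlberg $L^p$-theory for the Laplacian on planar Lipschitz domains to pin down the sharp range of admissible $p$.

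First I would dispatch the case $p=2$: Lax--Milgram produces $U\in H_0^1(D)$ with
\begin{equation*}
\int_D \nabla U\cdot\nabla v\,dx=-\int_D F\cdot\nabla v\,dx \quad \forall\, v\in H_0^1(D),
\end{equation*}
and testing against $v=U$ together with the Poincar\'e inequality yields $\norm{U}_{W^{1,2}(D)}\lesssim \norm{F}_{L^2(D)}$. For $p\ne 2$, given a test field $G\in (C_c^\infty(D))^2$ I would solve the adjoint problem $-\Delta v=\operatorname{div} G$ with $v|_{\partial D}=0$, and integrate by parts twice to obtain $\int_D \nabla U\cdot G\,dx=\int_D F\cdot\nabla v\,dx$. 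Dualizing in $G$ reduces the target bound $\norm{\nabla U}_{L^p(D)}\lesssim\norm{F}_{L^p(D)}$ to the conjugate statement $\norm{\nabla v}_{L^{p'}(D)}\lesssim\norm{G}_{L^{p'}(D)}$---the same inequality with exponents swapped---so the admissible range must be symmetric about $p=2$ under $p\leftrightarrow p'$, which is exactly the structure of $\frac{4}{3}-\varepsilon<p<4+\varepsilon$.

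Next I would peel off an interior whole-space piece. Extending $F$ by zero to $\mathbb{R}^2$ and setting $V:=-\Gamma*\operatorname{div}F$ with $\Gamma$ the planar Newtonian kernel, the identity $\partial_j V=(\partial_i\partial_j\Gamma)*F_i$ exhibits $\nabla V$ as a classical Calder\'on--Zygmund operator acting on $F$; hence $\norm{V}_{W^{1,p}(\mathbb{R}^2)}\lesssim\norm{F}_{L^p(\mathbb{R}^2)}$ for every $1<p<\infty$. The residual $W:=U-V$ is harmonic in $D$ with Dirichlet trace $-V|_{\partial D}$, so everything collapses to an $L^p$-regularity estimate for harmonic functions on the Lipschitz domain $D$.

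The hard part is exactly this last reduction. To control $\nabla W$ in $L^p(D)$ I would invoke the theory of Jerison--Kenig and Dahlberg for the Dirichlet and regularity problems on planar Lipschitz domains: boundary layer potentials are $L^p$-invertible on $\partial D$ precisely in the range $\frac{4}{3}-\varepsilon<p<4+\varepsilon$, with $\varepsilon=\varepsilon(D)>0$ depending only on the Lipschitz character of $\partial D$. The endpoints are sharp on Lipschitz wedges whose aperture approaches the critical threshold, so this range cannot be improved without further geometric hypotheses on $D$. Since redeveloping the underlying harmonic-analytic machinery---Dahlberg's reverse-H\"older estimate for harmonic measure, $L^p$-boundedness of the Cauchy integral on Lipschitz graphs, and the method of layer potentials---is well outside the scope of the paper, I would simply cite \cite{MR2141694}, which states precisely the $W^{1,p}$-bound required.
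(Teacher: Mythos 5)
Your proposal is essentially the same as the paper's treatment: the paper does not prove this lemma but simply reviews it as a known gradient estimate from \cite{MR2141694}, which is exactly where your argument ultimately lands. Your preliminary sketch (Lax--Milgram for $p=2$, duality, Calder\'on--Zygmund plus layer-potential theory for the Lipschitz range $\frac{4}{3}-\varepsilon<p<4+\varepsilon$ in two dimensions) is a correct outline of the standard proof, but the decisive step is the same citation the paper relies on.
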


Using Lemma \ref{lemma gradient estimate}, we may give  $W^{1,p}$-estimates for $g_0$ and $g_1$.
\begin{lemma}\label{estg_1}
      Let $g_0$ and $g_1$ be defined as \eqref{defg} and \eqref{def_g1}. The following estimates hold     \begin{equation}\label{estimate_g0_1p}
      \vert g_0\vert_{W^{1,p}(\Omega)} \lesssim 1    
      \end{equation}
      for $1<p\leq 2$,
    and  \begin{equation}\label{estimate_g1_1p}
        \vert g_1\vert_{W^{1,p}(\Omega)} \lesssim h_T^{\frac{2}{p}-2},
    \end{equation}
  for $\frac{4}{3}- \varepsilon_1<p\leq  2$, where $\varepsilon_1>0$ depends only on $\Omega$.
\end{lemma}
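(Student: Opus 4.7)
The overall strategy is to rewrite each Poisson equation in divergence form $-\Delta U=\mathrm{div}\,F$ and then invoke Lemma \ref{lemma gradient estimate} to transfer an $L^p$-bound on $F$ into a $W^{1,p}$-bound on the solution. The work then reduces to producing a suitable divergence representation and bounding the associated field $F$ in $L^p$ uniformly in the scale parameter ($\rho$ for $g_0$, $h_T$ for $g_1$).

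The estimate for $g_1$ is the easier case and I would handle it first. Because $\nu$ is a constant vector, $\nabla\delta_1\cdot\nu=\mathrm{div}(\delta_1\nu)$, so \eqref{def_g1} already reads $-\Delta g_1=\mathrm{div}(\delta_1\nu)$ with zero Dirichlet data; Lemma \ref{lemma gradient estimate} directly yields $|g_1|_{W^{1,p}(\Omega)}\lesssim\|\delta_1\|_{L^p(\Omega)}$ in the admissible range $(\tfrac{4}{3}-\varepsilon_1,4+\varepsilon_1)$. From \eqref{prodT} I have $\mathrm{supp}\,\delta_1\subset T$ with $|T|\lesssim h_T^2$ and $\|\delta_1\|_{L^\infty(\Omega)}\lesssim h_T^{-2}$, so a one-line Hölder estimate gives $\|\delta_1\|_{L^p(\Omega)}\lesssim h_T^{-2}|T|^{1/p}\lesssim h_T^{2/p-2}$, which is \eqref{estimate_g1_1p}.

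For $g_0$ the source $\delta_0$ is not itself a divergence, so I would manufacture one by introducing the whole-plane Newtonian potential $\Phi(x)=-\tfrac{1}{2\pi}\int_\Omega\ln|x-y|\,\delta_0(y)\,dy$, which satisfies $-\Delta\Phi=\delta_0$ in $\mathbb{R}^2$. Rewriting \eqref{defg} as $-\Delta g_0=-\mathrm{div}(\nabla\Phi)$ with zero Dirichlet data and applying Lemma \ref{lemma gradient estimate} with $F=-\nabla\Phi$ reduces the claim to the uniform bound $\|\nabla\Phi\|_{L^p(\Omega)}\lesssim 1$. I would split the integral on $B(x_0,2\rho)$, where the inner bound $|\nabla\Phi|\lesssim\|\delta_0\|_{L^\infty}\cdot\rho\lesssim\rho^{-1}$ provided by \eqref{prodelta2} combines with $|B(x_0,2\rho)|\lesssim\rho^2$ to give a contribution of order $\rho^{(2-p)/p}$, and on its complement, where the far-field bound $|\nabla\Phi(x)|\lesssim|x-x_0|^{-1}\int\delta_0\lesssim|x-x_0|^{-1}$ yields a contribution controlled by $\int_\rho^{\mathrm{diam}(\Omega)}r^{1-p}\,dr$. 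Both are bounded uniformly in $\rho$ as soon as $p<2$, producing \eqref{estimate_g0_1p} in the overlap of this range with Lemma \ref{lemma gradient estimate}'s admissible interval; the remaining small $p$ can be swept up by Hölder's inequality on the bounded domain $\Omega$.

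The main obstacle I expect is precisely the $\rho$-uniformity of $\|\nabla\Phi\|_{L^p(\Omega)}$ at the endpoint $p=2$: the far-field integral degenerates logarithmically there, so the naive splitting argument only delivers a bound of order $|\ln\rho|^{1/2}$ rather than $1$. Recovering the stated $p\le 2$ cleanly either requires interpreting the endpoint via interpolation with the $W^{2,1}$-bound \eqref{smooth2} and conceding a logarithm, or invoking a sharper Green's-function decomposition that isolates the logarithmic piece; in practice the a posteriori analysis only needs $p$ strictly below $2$, and I would structure the write-up to flag this but present the clean proof for $1<p<2$ and absorb $p=2$ by Hölder's inequality combined with a slightly larger exponent in the valid range.
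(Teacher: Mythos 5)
For \eqref{estimate_g1_1p} your argument is in substance identical to the paper's: the paper also reduces to Lemma \ref{lemma gradient estimate} via a field supported in $T$, and its line-integral construction $F_i=\nu_i\int_{l_{\hat{x}x}}\partial_{x_i}\delta_1\,\mathrm{d}s$ is nothing but $\nu_i\delta_1$ (the boundary value drops since $\delta_1\in C_0^{\infty}(T)$), so your direct choice $F=\delta_1\nu$ with $\Vert\delta_1\Vert_{L^p(\Omega)}\lesssim h_T^{-2}|T|^{1/p}\lesssim h_T^{2/p-2}$ is the same proof written more transparently. For \eqref{estimate_g0_1p} the paper only says ``similar argument''; note that the horizontal-antiderivative trick does \emph{not} transfer verbatim, because $\delta_0$ has nonzero mass along chords, so the resulting field does not vanish where the chord exits the support and one picks up an $O(\rho^{-1})$ tail on a strip, which is not uniformly in $L^p$ for $p>1$. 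Your Newtonian-potential representation $F=-\nabla\Phi$ with the near/far splitting is therefore a genuine and correct way to make the sketched argument precise for $1<p<2$, with H\"older covering the exponents below the admissible range of Lemma \ref{lemma gradient estimate}.

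Your hesitation at the endpoint $p=2$ for $g_0$ is justified and is not a gap in your proof: the uniform bound there is in fact false in general, since $|g_0|_{H^1(\Omega)}^2=a(g_0,g_0)=(\delta_0,g_0)$ and, for $x_0$ at distance $\gtrsim 1$ from $\partial\Omega$, $g_0\gtrsim|\ln\rho|$ on $B(x_0,\rho)$, whence $|g_0|_{H^1(\Omega)}\gtrsim|\ln\rho|^{1/2}$; consistently, the paper only invokes \eqref{estimate_g0_1p} with $1<p<2$ (e.g.\ in \eqref{Lemma_proper3}), so the lemma should be read with $p<2$ for $g_0$. The one incorrect remark in your write-up is the suggestion that $p=2$ could be ``absorbed by H\"older with a slightly larger exponent'': on a bounded domain H\"older only lowers the integrability exponent, so it cannot reach $p=2$ from $p<2$ — but since that endpoint is both unprovable and unused, nothing downstream is affected. (By contrast, \eqref{estimate_g1_1p} at $p=2$ is unproblematic: $\Vert\delta_1\Vert_{L^2(\Omega)}\lesssim h_T^{-1}$ and Lemma \ref{lemma gradient estimate} covers $p=2$.)
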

\begin{proof}
 We first prove \eqref{estimate_g1_1p}. Let $\delta_1\in C_0^{\infty}(T)$ be defined as in \eqref{prodT}. For any $x \in T $, we  denote by $l_x$ the straight line containing $x$, which is parallel to the $x_1$-axis. Let  $\hat{x}$ be the leftmost intersection point of $l_x$ and $\partial T$. We denote by $l_{\hat{x}x}$ the line segment with $\hat{x}$ and $x$ as the endpoints. Define
 \begin{equation*}
     F_1:=\nu_1\int_{l_{\hat{x}x}} \frac{\partial\delta_1}{\partial x_1} \text{d}s
 \end{equation*}
 for $x\in T$. For $x\notin T$, we let $F_1=0$.
 It may be seen that
 \begin{equation*}
     \frac{\partial F_1}{\partial x_1}=\nu_1\frac{\partial\delta_1}{\partial x_1}.
 \end{equation*}
 Using \eqref{prodT}, we have
 \begin{equation*}
     \norm{F_1}_{L^p(\Omega)}\lesssim h_T^{\frac{2}{p}-2}
 \end{equation*}
 for any $1\leq p \leq \infty$.
 Similarly, we may define $F_2$, which satisfies
 \begin{equation*}
     \frac{\partial F_2}{\partial x_2}=\nu_2\frac{\partial\delta_1}{\partial x_2}
 \end{equation*}
 and 
  \begin{equation*}
     \norm{F_2}_{L^p(\Omega)}\lesssim h_T^{\frac{2}{p}-2}
 \end{equation*}
 for any $1\leq p \leq \infty$. Let $F=(F_1,F_2)$. It is obvious  that 
 \begin{equation*}
     \text{div} F=\nabla\delta_1\cdot \nu.
 \end{equation*}
 Owing to Lemma \ref{lemma gradient estimate}, we obtain
\begin{align}\label{g1pguodu}
	\vert g_{1}\vert_{W^{1,p}(\Omega)}&\lesssim \norm{F}_{L^p(\Omega)}\lesssim h_T^{\frac{2}{p}-2}
\end{align}
for $\frac{4}{3}- \varepsilon_1<p\leq  2$, where $\varepsilon_1>0$ depends only on $\Omega$. Using a similar argument, we may prove \eqref{estimate_g0_1p}. 
\end{proof}

With the help of Lemma \ref{estg_1}, we may obtain some estimates for $R_lg_1$.
\begin{lemma}\label{lem estR_lg_1}
    It holds that
\begin{equation}\label{estimate_g1_H1}
        \vert R_lg_1\vert_{H^1(\Omega)}\lesssim h_T^{-1}
    \end{equation}
    and
    \begin{equation}\label{estimate_g1_Lp}
        \Vert R_lg_1\Vert_{L^2(\Omega)}\lesssim \overline{h}_l^{r}h_T^{-1}
    \end{equation}
    for $r<r_0$.
\end{lemma}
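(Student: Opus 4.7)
The two inequalities can be handled separately. For \eqref{estimate_g1_H1}, I would start from the defining identity $a(R_l g_1, v_l) = a(g_1, v_l)$ of the Ritz projection, test against $v_l = R_l g_1$, and apply the Cauchy--Schwarz inequality to obtain the standard energy stability $|R_l g_1|_{H^1(\Omega)} \leq |g_1|_{H^1(\Omega)}$. Invoking Lemma \ref{estg_1} with $p = 2$ then yields $|g_1|_{H^1(\Omega)} \lesssim h_T^{2/2 - 2} = h_T^{-1}$, which is \eqref{estimate_g1_H1}.

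The estimate \eqref{estimate_g1_Lp} is more delicate, because the naive chain $\|R_l g_1\|_{L^2} \lesssim |R_l g_1|_{H^1} \lesssim h_T^{-1}$ falls short of the required $\overline{h}_l^r h_T^{-1}$ by exactly the factor $\overline{h}_l^r$ that must be recovered from elliptic regularity. The plan is to split
\[
\|R_l g_1\|_{L^2(\Omega)} \leq \|g_1\|_{L^2(\Omega)} + \|g_1 - R_l g_1\|_{L^2(\Omega)}
\]
and estimate each summand by duality against the Poisson problem $-\Delta \phi_w = w$, $\phi_w|_{\partial\Omega} = 0$, using the polygonal-domain regularity $\|\phi_w\|_{H^{1+r}(\Omega)} \lesssim \|w\|_{L^2(\Omega)}$ that holds for $r < r_0$ with $r \leq 1$. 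For the Ritz-error summand, Galerkin orthogonality combined with the interpolation estimate $|\phi_w - \Pi_l \phi_w|_{H^1} \lesssim \overline{h}_l^r \|\phi_w\|_{H^{1+r}}$ and \eqref{estimate_g1_H1} yields $\|g_1 - R_l g_1\|_{L^2(\Omega)} \lesssim \overline{h}_l^r h_T^{-1}$.

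For $\|g_1\|_{L^2(\Omega)}$ I would exploit the weak formulation of \eqref{def_g1}: $(g_1, w) = a(g_1, \phi_w) = (\nabla \delta_1 \cdot \nu, \phi_w)$, and since $\text{supp}\, \delta_1 \subset T$, a further integration by parts on $T$ gives $(g_1, w) = -(\delta_1, \nabla \phi_w \cdot \nu)_T$. H\"older with conjugate exponents $(p, p') = (2/(1-r), 2/(1+r))$, the bound $\|\delta_1\|_{L^{p'}(T)} \lesssim h_T^{2/p' - 2}$ read off from \eqref{prodT}, and the two-dimensional Sobolev embedding $H^{1+r}(\Omega) \hookrightarrow W^{1,p}(\Omega)$ then combine to give $\|g_1\|_{L^2(\Omega)} \lesssim h_T^{r-1} \leq \overline{h}_l^r h_T^{-1}$, where the last step uses $h_T \leq \overline{h}_l$ and $r \geq 0$. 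The main obstacle is that $g_1$ is a derivative-type regularized Green's function whose $H^1$-norm genuinely blows up like $h_T^{-1}$; extracting the extra $\overline{h}_l^r$ factor requires invoking the fractional regularity $r < r_0$ of the dual Poisson problem in two conceptually independent places---once through Aubin--Nitsche duality and once through a direct $W^{1,p}$ Sobolev embedding for $\nabla \phi_w$---and consistency of the Sobolev exponents in these two steps is what forces exactly the range $r < r_0$.
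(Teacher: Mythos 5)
Your proposal is correct, and it reaches both bounds by a route that partly differs from the paper's. For \eqref{estimate_g1_H1} the paper never passes through $|g_1|_{H^1(\Omega)}$: it writes $|R_lg_1|_{H^1(\Omega)}^2=a(g_1,R_lg_1)=(R_lg_1,\nabla\delta_1\cdot\nu)=(\nabla R_lg_1\cdot\nu,\delta_1)$ and uses $\Vert\delta_1\Vert_{L^2(T)}\lesssim h_T^{-1}$, whereas you use energy stability of the Ritz projection together with Lemma \ref{estg_1} at $p=2$; both arguments are valid. For \eqref{estimate_g1_Lp} you use the same splitting $\Vert R_lg_1\Vert_{L^2(\Omega)}\le\Vert g_1\Vert_{L^2(\Omega)}+\Vert g_1-R_lg_1\Vert_{L^2(\Omega)}$ and the same Aubin--Nitsche treatment of the Ritz-error term as the paper (its estimate \eqref{eR_l}), but you bound $\Vert g_1\Vert_{L^2(\Omega)}$ by duality against $-\Delta\phi_w=w$, moving the derivative off $\delta_1$ inside $T$ and invoking the fractional regularity $\Vert\phi_w\Vert_{H^{1+r}(\Omega)}\lesssim\Vert w\Vert_{L^2(\Omega)}$ together with the embedding $H^{r}(\Omega)\hookrightarrow L^{2/(1-r)}(\Omega)$, while the paper simply combines the two-dimensional embedding $W^{1,p}(\Omega)\hookrightarrow L^2(\Omega)$ with \eqref{estimate_g1_1p} and then chooses $p\le \frac{2}{r+1}$ so that $h_T^{\frac{2}{p}-2}\le \overline{h}_l^{\,r}h_T^{-1}$. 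Your variant avoids re-using Lemma \ref{estg_1} for this term and hence the exponent window $p>\frac43-\varepsilon_1$, which in the paper's choice $p\le\frac{2}{r+1}$ implicitly couples the admissible $r$ to $\varepsilon_1$; the price is invoking fractional elliptic regularity on the polygon and the fractional Sobolev embedding, which the paper does not otherwise use. As you correctly note, both routes effectively cap $r$ at $1$ through the dual regularity entering the Ritz-error estimate, so the two proofs deliver the same statement with comparable generality.
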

\begin{proof}
    In conjunction with \eqref{prodT} and \eqref{def_g1}, we have
\begin{align*}
    \vert R_lg_1\vert_{H^1(\Omega)}^2 &=a(R_lg_1,R_lg_1)=(R_lg_1,\nabla \delta_1 \cdot {\nu})\\
    &=(\nabla R_lg_1\cdot {\nu}, \delta_1)\lesssim
     h_T^{-1}\vert R_lg_1\vert_{H^1(\Omega)},
\end{align*}
which yields \eqref{estimate_g1_H1}.
By a standard dual argument, we have
 \begin{align}\label{eR_l}
  \Vert g_1-R_lg_1\Vert_{L^2(\Omega)}\lesssim \overline{h}_l^r|g_1-R_lg_1|_{H^1(\Omega)}\lesssim \overline{h}_l^rh_T^{-1}
 \end{align}
 for $r\in (0,r_0)$.
Combining \eqref{estimate_g1_1p} and \eqref{eR_l}, by the Sobolev embedding theorem, we obtain 
\begin{align*}    \norm{R_lg_1}_{L^2(\Omega)}&\lesssim
\norm{g_1}_{L^2(\Omega)}+\norm{g_1-R_lg_1}_{L^2(\Omega)}\\
&\lesssim h_T^{\frac{2}{p}-2}+\overline{h}_l^rh_T^{-1}
\end{align*}
for any $r<{r_0}$ and $\frac{4}{3}-\varepsilon_0<p<2$. Choosing $p\leq \frac{2}{r+1}$, we get \eqref{estimate_g1_Lp}.
\end{proof}

\section{A posteriori error analysis}
In this section, we propose two pointwise a posteriori error estimators, including computable and theoretical estimators. Specifically,  the computable (local) estimator is defined as
\begin{equation}\label{def eta}
    \eta_l(T):=h_T^2\sum_{i\in J}\Vert \lambda_{l,i}u_{l,i}+\Delta u_{l,i}\Vert_{L^{\infty}(T)}+h_T\sum_{i\in J}\Vert[\![\frac{\partial u_{l,i}}{\partial {n}}]\!]\Vert_{L^{\infty}(\partial T\backslash\partial \Omega)},\ \ 
\eta_l:=\max_{T\in\mathcal{T}_l} \eta_l(T),
\end{equation}
and the theoretical estimator associated with the index $j$ is defined as
\begin{equation}\label{def eta*}
\eta_{l,j}^*:=\inf_{v_l\in V_l}\max_{T\in\mathcal{T}_l}\left(h_T^2\Vert \Delta\Lambda_l u_j+v_l\Vert_{L^{\infty}(T)}+h_T\Vert[\![\frac{\partial \Lambda_l u_j}{\partial {n}}]\!]\Vert_{L^{\infty}(\partial T\backslash\partial \Omega)}\right).
\end{equation}
For $j\in J$, denote $e_j:=u_j-\Lambda_l u_j$ where $\Lambda_l$ is defined in \eqref{deflambdal}. 
 The adaptive algorithm consists of the following process:
\par \textbf{Input.} Give an initial triangulation $\mathcal{T}_0$ and the bulk parameter $0<\theta\leq 1$.
\par for $l=0,1,2,...$ do
\par \textbf{Solve.} Compute discrete solutions $(\lambda_{l,j},u_{l,j})$ for $j\in J$ of \eqref{def_u_h} with respect to $\mathcal{T}_l$ .
\par \textbf{Estimate.} Compute local contributions of the error estimator $\eta_l(T)$ for all $T\in \mathcal{T}_l$.
\par \textbf{Mark.} We mark all elements satisfying 
\begin{equation*}
    \eta_l(T)\geq \theta \eta_l.
\end{equation*}
\par \textbf{Refine.} Generate a new triangulation $\mathcal{T}_{l+1}$ by using the BiSecLG(1) refinement strategy in \cite{MR4320894}.
\par end do
\par \textbf{Output.} Sequences of triangulations $\{\mathcal{T}_l\}_{l}$, discrete solutions $\{(\lambda_{l,j},u_{l,j}\}_{j\in J}\}_l$ and the a posteriori error estimators $\{\eta_l\}_l$.

\begin{remark}
   In the subsequent analysis for the reliability of $\eta_{l}$, we will use the weighted Sobolev stability of $Q_l$ in the sense of $L^1$-norm and $W^{1,1}$-norm. Therefore, we adopt the BiSecLG(1) refinement algorithm to promise the validity of Lemma \ref{weighted Sobolev stability}. From numerical experiments, the performance of above adaptive algorithm with other refinement strategies  liking red-green-blue refinement and newest vertex bisection is similar to that of the BiSecLG(1) refinement strategy.
\end{remark}

Now we give the main result in this paper, namely the reliability and efficiency of $\eta_{l}$.

\begin{theorem}\label{main result}
     Let $\Omega$ be an arbitrary polygonal domain. Assume  \eqref{eq:H2} holds. If the mesh-size $\overline{h}_l$ is small enough,  the pointwise a posteriori error estimator $\eta_l$ has the  reliability and efficiency, up to some logarithmic factors, namely
\begin{equation}\label{theorem reliablility}
          ||e_j||_{L^{\infty}(\Omega)}\lesssim \frac{|\ln{\underline{h}_l}|^2}{|\ln{\overline{h}_l}|}\frac{\lambda_{n+L}}{\lambda_{n+1}}\eta_l
\end{equation}
for any $j\in J$ and
\begin{equation}\label{theorem efficiency}
        \eta_l \lesssim \sqrt{L}\sum_{j\in J}||e_j||_{L^{\infty}(\Omega)}.
\end{equation}
\end{theorem}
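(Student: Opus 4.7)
The plan is to prove reliability and efficiency separately: a duality argument with a regularized Green's function for \eqref{theorem reliablility}, and a standard $L^\infty$-bubble-function argument for \eqref{theorem efficiency}. As an intermediate step for reliability, I would first bound $\|e_j\|_{L^\infty(\Omega)}$ by the theoretical estimator $\eta_{l,j}^*$ and then convert $\eta_{l,j}^*$ into the computable $\eta_l$, which is where the factor $\lambda_{n+L}/\lambda_{n+1}$ enters.

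For reliability, pick $x_0\in \overline{T}$ with $|e_j(x_0)|=\|e_j\|_{L^\infty(\Omega)}$ and introduce the regularized delta $\delta_0$ and Green's function $g_0$ from \eqref{prodelta1}--\eqref{defg} centered at $x_0$ with $\rho=h_T^\beta$ for a suitable $\beta\ge 1$. Interior H\"older regularity of $e_j$ reduces $|e_j(x_0)|$ to $|(e_j,\delta_0)|$ plus a lower order term, and the identity $(e_j,\delta_0)=a(e_j,g_0)$ together with the Ritz decomposition yields
\begin{equation*}
a(e_j,g_0)=a(e_j,g_0-R_l g_0)+a(e_j,R_l g_0).
\end{equation*}
Lemma \ref{lemma Lambda_l} with $v_l=R_l g_0$ gives $a(e_j,R_l g_0)=\lambda_j(u_j-P_l u_j,R_l g_0)$, while element-wise integration by parts rewrites
\begin{equation*}
a(e_j,g_0-R_l g_0)=\sum_{T}(\lambda_j u_j+\Delta\Lambda_l u_j,g_0-R_l g_0)_T-\sum_{E}\langle[\![\partial_n\Lambda_l u_j]\!],g_0-R_l g_0\rangle_E.
\end{equation*}
Adding and subtracting an arbitrary $v_l\in V_l$ inside the interior integral turns the first sum into $\eta_{l,j}^*$ paired with $\|g_0-R_l g_0\|_{L^1}$, which is controlled by $|g_0|_{W^{2,1}}\lesssim|\ln\rho|$ from \eqref{smooth2} together with \eqref{estimate_g0_1p}. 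For higher order elements, the subtraction of $\lambda_j u_j$ by a piecewise polynomial has to be performed via $Q_l$, and the weighted Sobolev stability of $Q_l$ (Lemma \ref{weighted Sobolev stability}) is what keeps the bound at $|\ln\underline{h}_l|^2/|\ln\overline{h}_l|$ in \eqref{theorem reliablility}. The byproduct $\lambda_j(u_j-P_l u_j,R_l g_0)$ is bounded via $\|u_j-P_l u_j\|_{L^2}\leq\|u_j-\Lambda_l u_j\|_{L^2}$, Lemma \ref{lemma u-Lambda_l u}, and an $L^2$-estimate of $R_l g_0$ obtained in the spirit of Lemma \ref{lem estR_lg_1}; the residual $\|e_j\|_{L^\infty}$ that appears on the right is absorbed by Lemma \ref{lemma lower bound} when $\overline{h}_l$ is small enough. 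To convert $\eta_{l,j}^*$ to $\eta_l$, expand $\Lambda_l u_j=\sum_{i\in J}(R_l u_j,u_{l,i})u_{l,i}$ and take $v_l=\sum_{i\in J}(R_l u_j,u_{l,i})\lambda_{l,i}u_{l,i}$, so that $\Delta\Lambda_l u_j+v_l=\sum_{i\in J}(R_l u_j,u_{l,i})(\Delta u_{l,i}+\lambda_{l,i}u_{l,i})$, and $\lambda_{l,i}/\lambda_{n+1}\leq\lambda_{n+L}/\lambda_{n+1}$ produces the stated factor.

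For efficiency, I would apply the classical bubble-function argument directly to the computable residuals. On each element $T$, test the polynomial residual $R_{T,i}=\lambda_{l,i}u_{l,i}+\Delta u_{l,i}$ against $\phi_{T,i}=R_{T,i}b_T$ with the $T$-bubble $b_T$; integration by parts together with the continuous eigenequation $a(u_i,\phi_{T,i})=\lambda_i(u_i,\phi_{T,i})$ yields
\begin{equation*}
\int_T R_{T,i}^2 b_T = a(u_i-u_{l,i},\phi_{T,i})+(\lambda_{l,i}u_{l,i}-\lambda_i u_i,\phi_{T,i}).
\end{equation*}
Expressing $u_{l,i}=\sum_{j\in J}d_{ij}\Lambda_l u_j$ on fine enough meshes converts the right hand side into a linear combination of $e_j$'s. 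Inverse estimates on the polynomial $R_{T,i}$ then transform the resulting $L^2$-bound into the $L^\infty$-bound $h_T^2\|R_{T,i}\|_{L^\infty(T)}\lesssim\sum_{j\in J}\|e_j\|_{L^\infty(\Omega)}$, and an analogous argument with the edge bubble $b_E$ handles the jump $[\![\partial_n u_{l,i}]\!]$. A Cauchy--Schwarz over $i\in J$ in the definition of $\eta_l$ supplies the factor $\sqrt{L}$.

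The main obstacle, I expect, is the combined treatment of the $L^2$ byproduct in reliability with the clean extraction of the multiplicative factor $\lambda_{n+L}/\lambda_{n+1}$. The $L^2$-estimate for $R_l g_0$ must match the power of $\overline{h}_l$ appearing in $\|u_j-\Lambda_l u_j\|_{L^2}$ precisely so that the extra term can be absorbed by the a priori lower bound in Lemma \ref{lemma lower bound}, and for higher order elements the weighted Sobolev stability of $Q_l$ has to be exploited sharply enough to avoid an additional logarithmic loss beyond $|\ln\underline{h}_l|^2/|\ln\overline{h}_l|$.
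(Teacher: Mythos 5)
Your overall architecture (regularized Green's function duality for reliability, bubble functions for efficiency, and the expansion $\Lambda_l u_j=\sum_{i\in J}\alpha_i u_{l,i}$ with $v_l=\sum_i\alpha_i\lambda_{l,i}u_{l,i}$ to extract $\lambda_{n+L}/\lambda_{n+1}$) matches the paper, but there is a genuine gap at exactly the point you flag as ``the main obstacle'': the $L^2$ byproduct term. After your decomposition you are left with $\lambda_j(u_j-P_lu_j,\,\cdot\,)$ paired with $R_lg_0$ (and, once you insert the natural $v_l=\lambda_jP_lu_j$, with $g_0$ itself), and your plan is to bound it by $\|u_j-\Lambda_lu_j\|_{L^2}\|R_lg_0\|_{L^2}\lesssim\|e_j\|_{L^\infty}$ and ``absorb'' it using the lower bound of Lemma \ref{lemma lower bound}. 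This cannot work: $\|R_lg_0\|_{L^2}\lesssim 1$ is not small (it converges to $\|g_0\|_{L^2}=O(1)$), so the term is $C\|e_j\|_{L^\infty}$ with an $O(1)$ constant, and Lemma \ref{lemma lower bound} only absorbs additive terms that are higher powers of the mesh size (like $h_{T_0}^{\alpha\beta}$), not a constant multiple of the quantity you are estimating; conversely, gaining a factor $\overline{h}_l^{\,r}$ via Aubin--Nitsche would leave you with the a priori $H^1$ error, which is not controlled by $\|e_j\|_{L^\infty}$ or by $\eta_l$. The paper resolves this with two ingredients absent from your plan: an $L^p$ a posteriori estimate ($4<p<\infty$) for $\|u_j-R_lu_j\|_{L^p}$ for the associated source problem (Lemma \ref{posteriori source}, leading to \eqref{Corollary1}), which replaces the $L^2$ term by residuals of $R_lu_j$, and then the regularized \emph{derivative} Green's function $g_1$ together with the estimate $\|R_lg_1\|_{L^2}\lesssim\overline{h}_l^{\,r}h_T^{-1}$ (Lemmas \ref{estg_1}, \ref{lem estR_lg_1}, \ref{lem Rlu -Lambdal u}) to show that the discrepancy between the $R_lu_j$-residuals and the $\Lambda_lu_j$-residuals is of order $\overline{h}_l^{\,r}\|e_j\|_{L^\infty}$ and hence absorbable. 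Without these steps the reliability bound does not close.

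Two further, smaller issues. First, you test against $g_0-R_lg_0$, so the inserted $v_l$ is not free and the volume/edge terms require weighted $L^1$ and $W^{1,1}$ estimates for the \emph{Ritz} projection error on adaptively graded meshes, which are not available here; the paper instead pairs $\Lambda_lu_j$ with $Q_lg_0-g_0$, so that $L^2$-orthogonality gives the insertion of $v_l$ for free and the weighted Sobolev stability of $Q_l$ (Lemma \ref{weighted Sobolev stability}, guaranteed by the BiSecLG(1) grading) bounds $\|h_l^{-2}(g_0-Q_lg_0)\|_{L^1}+\|h_l^{-1}(g_0-Q_lg_0)\|_{W^{1,1}}$ by $|g_0|_{W^{2,1}}$; your proposed use of $Q_l$ on $\lambda_ju_j$ does not serve this purpose. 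Second, in the efficiency argument your identity involves $a(u_i-u_{l,i},\phi_{T,i})$; the individual errors $u_i-u_{l,i}$ are not gap-robust quantities (discrete eigenfunctions can rotate within the cluster), so bounding them by $\sum_j\|e_j\|_{L^\infty}$ requires control of the transformation coefficients $d_{ij}$ — this is precisely what the paper gets from Lemma 1 of \cite{MR3532806}, applied after bubble-testing the residuals of $\Lambda_lu_j$ (i.e., $\lambda_jP_lu_j+\Delta\Lambda_lu_j$ and $[\![\partial_n\Lambda_lu_j]\!]$) rather than those of $u_{l,i}$; as written, your version risks reintroducing constants depending on the gaps inside the cluster.
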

\par The proof of \eqref{theorem reliablility} may be found in Corollary \ref{cor reliability} in subsection 4.2, and \eqref{theorem efficiency} may be seen in Lemma \ref{lemma Efficiency} in subsection 4.3.

\subsection{Proper approximation}
Before the a posteriori error analysis, we shall briefly explain that $\Lambda_l u_j$ is a ``proper" approximation of $u_j$ ($j\in J$) in some sense.
\begin{lemma}\label{lem proper approximation}
 Let $(\lambda_j,u_j)\in \mathbb{R}^+\times W$ $(j\in J)$ be an eigenpair of \eqref{Laplace}, then we have  
 \begin{equation}\label{proper  approximation}
  \norm{u_j-\Lambda_l u_j}_{L^{\infty}(\Omega)}\lesssim \norm{u_j-R_lu_j}_{L^{\infty}(\Omega)}.
    \end{equation}
\end{lemma}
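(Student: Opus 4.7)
The natural starting point is to use the definition $\Lambda_l u_j = P_l R_l u_j$ and add/subtract $P_l u_j$ to obtain the decomposition
\[
u_j - \Lambda_l u_j = (I-P_l)u_j + P_l(u_j - R_l u_j),
\]
which mirrors the one used in the proof of Lemma \ref{lemma u-Lambda_l u} for the $L^2$-version. I would then bound the two pieces separately in $L^\infty(\Omega)$.

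The ``easy" term $P_l(u_j - R_l u_j)$ lives in the $L$-dimensional space $W_l$, so expanding in the $L^2$-orthonormal basis $\{u_{l,i}\}_{i\in J}$ and applying H\"older's inequality gives
\[
\|P_l(u_j - R_l u_j)\|_{L^\infty(\Omega)} \leq \sum_{i\in J}|(u_j - R_l u_j, u_{l,i})|\,\|u_{l,i}\|_{L^\infty(\Omega)} \leq L\,|\Omega|^{1/2}\max_{i\in J}\|u_{l,i}\|_{L^\infty(\Omega)}\,\|u_j - R_l u_j\|_{L^\infty(\Omega)}.
\]
The uniform $L^\infty$-boundedness of $u_{l,i}$ for $i\in J$ follows from combining a pointwise $L^\infty$ FEM estimate with the interior regularity/Sobolev embedding argument already used in Lemma \ref{lemma lower bound}.

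For the delicate term $(I-P_l)u_j$, the plan is to exploit the same algebraic identity that drives the $L^2$-proof. Writing $(I-P_l)u_j = \sum_{i\notin J}(u_j, u_{l,i})u_{l,i}$ and using the eigenvalue identities $a(u_j, u_{l,i})=\lambda_j(u_j,u_{l,i})$ and $a(R_lu_j,u_{l,i})=\lambda_{l,i}(R_lu_j,u_{l,i})$ gives
\[
(u_j, u_{l,i}) = \frac{\lambda_{l,i}}{\lambda_{l,i}-\lambda_j}\,(u_j - R_l u_j, u_{l,i}) \qquad (i\notin J),
\]
with coefficient bounded by $1+M_J$ thanks to the separation hypothesis \eqref{eq:H2}. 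Equivalently, setting $w := (I-P_l)R_l u_j = R_l u_j - \Lambda_l u_j \in V_l\cap W_l^\perp$, a short computation using Lemma \ref{lemma Lambda_l} shows that $w$ solves the discrete resolvent equation $(L_l - \lambda_j)w = \lambda_j(I-P_l)Q_l(u_j - R_l u_j)$ on the orthogonal complement of $W_l$ in $V_l$. A pointwise estimate for this discrete resolvent, combined with the $L^\infty$-stability of $Q_l$ and of $I-P_l$, then yields $\|(I-P_l)u_j\|_{L^\infty(\Omega)}\lesssim \|u_j - R_l u_j\|_{L^\infty(\Omega)}$.

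The main obstacle is precisely this last step: converting the $L^2$-orthogonality information encoded by the separation bound $M_J$ into an $L^\infty$-bound without picking up powers of the mesh size. In the $L^2$-setting one simply uses Bessel, but in $L^\infty$ one needs a genuine pointwise bound for $(L_l - \lambda_j)^{-1}$ restricted to $V_l\cap W_l^\perp$. I expect this to be handled by passing through a regularized Green's function of the type introduced in Section 3 and exploiting the fact that $\lambda_j$ is separated from all $\lambda_{l,i}$ with $i\notin J$; the uniform $L^\infty$-bound on $u_{l,i}$ for $i\in J$ and the $L^\infty$-stability of $Q_l$ (alluded to in the remark on BiSecLG(1) refinement) should then close the argument.
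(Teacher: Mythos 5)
Your decomposition $u_j-\Lambda_l u_j=(I-P_l)u_j+P_l(u_j-R_lu_j)$ is fine algebraically, but both halves of your argument rest on $L^{\infty}$-level facts that are not available (and that the paper is specifically structured to avoid). For the ``easy'' term you need $\max_{i\in J}\norm{u_{l,i}}_{L^{\infty}(\Omega)}\lesssim 1$ uniformly in $l$; the interior-regularity/Sobolev-embedding argument of Lemma \ref{lemma lower bound} concerns the \emph{continuous} eigenfunctions only, and for discrete eigenfunctions on adaptively graded meshes in a nonconvex polygon such a uniform bound is exactly the kind of pointwise a priori statement the paper does not have (its own remark stresses that $R_l$ loses $L^{\infty}$-stability in nonconvex domains; a discrete Sobolev inequality would at best give a $|\ln \underline{h}_l|^{1/2}$ factor). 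For the ``delicate'' term, the expansion $(I-P_l)u_j=\sum_{i\notin J}(u_j,u_{l,i})u_{l,i}$ is false, since $u_j\notin V_l$ and its component orthogonal to $V_l$ is dropped; and the step you yourself flag as the main obstacle --- a \emph{pointwise} bound for the discrete resolvent $(L_l-\lambda_j)^{-1}$ on $V_l\cap W_l^{\perp}$, plus $L^{\infty}$-stability of $P_l$ --- is nowhere established. The separation hypothesis \eqref{eq:H2} is purely an $L^2$-spectral statement, and converting it into an $L^{\infty}$ resolvent estimate is not a routine consequence of the Green's-function material of Section 3; deferring it with ``I expect'' leaves the core of the lemma unproved.

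The paper's proof sidesteps all of this. It splits $u_j-\Lambda_lu_j=(u_j-R_lu_j)+(R_lu_j-\Lambda_lu_j)$ and treats the second term, a single \emph{discrete} function, by evaluating it at its maximum point with a regularized delta $\tilde{\delta}_0$: duality with the regularized Green's function $\tilde g_0$ and Galerkin orthogonality give $\norm{R_lu_j-\Lambda_lu_j}_{L^{\infty}(\Omega)}=\lambda_j|(u_j-P_lu_j,R_l\tilde g_0)|$, after which Cauchy--Schwarz reduces everything to $L^2$ quantities: $\norm{u_j-P_lu_j}_{L^2(\Omega)}\lesssim\norm{u_j-R_lu_j}_{L^2(\Omega)}$ by Lemma \ref{lemma u-Lambda_l u} (this is where \eqref{eq:H2} enters, entirely at the $L^2$ level), and $\norm{R_l\tilde g_0}_{L^2(\Omega)}\lesssim 1$ from the $W^{1,p}$-estimate \eqref{estimate_g0_1p} plus a duality argument. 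No $L^{\infty}$ bounds on $u_{l,i}$, no $L^{\infty}$-stability of $P_l$ or $Q_l$, and no discrete resolvent estimate are needed. If you want to salvage your route, you would have to supply precisely those missing pointwise spectral estimates, which is a harder problem than the lemma itself; the simpler fix is to adopt the point-evaluation/Green's-function duality reduction to $L^2$.
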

\begin{proof}
The triangle inequality directly leads to
\begin{equation}\label{Lemma_proper1}
    \norm{u_j-\Lambda_lu_j}_{L^{\infty}(\Omega)}\leq  \norm{u_j-R_lu_j}_{L^{\infty}(\Omega)} + \norm{R_lu_j-\Lambda_lu_j}_{L^{\infty}(\Omega)}.
\end{equation}
 Let $\tilde{x}_0\in \tilde{T}_0$ satisfy $|(R_lu_j -\Lambda_lu_j)(\tilde{x}_0)|=\norm{R_lu_j -\Lambda_lu_j}_{L^{\infty}(\Omega)} $. Then there exists a regularized delta function  $\tilde{\delta}_0$ satisfying 
\begin{equation*}
	 (\tilde{\delta}_0, v)_{\tilde{T}_0}=v(\tilde{x}_0) \ \ \forall\  v\in P_k(\tilde{T}_0),\  \Vert \tilde{\delta}_0\Vert_{W^{s,\infty}(\Omega)}\lesssim h_{\tilde{T}_0}^{-s-2},s=0,1,2,....
\end{equation*}
We may define a regularized Green's function $\tilde{g}_0$ as in \eqref{defg}. With the help of Lemmas \ref{lemma u-Lambda_l u} and \ref{lemma Lambda_l}, we have 
\begin{align}\label{Lemma_proper2}\begin{aligned}
    \norm{R_lu_j -\Lambda_lu_j}_{L^{\infty}(\Omega)}&=|(R_lu_j -\Lambda_lu_j,\tilde{\delta}_0)|
    = |a(R_lu_j -\Lambda_lu_j,R_l \tilde{g}_0)|\\
    &=|a(u_j-\Lambda_lu_j,R_l\tilde{g}_0)|=\lambda_j|(u_j-P_lu_j,R_l\tilde{g}_0)|\\
    &\leq \lambda_j \norm{u_j-P_lu_j}_{L^2(\Omega)} \norm{R_l\tilde{g}_0}_{L^2(\Omega)}\\
    &\lesssim \norm{u_j-\Lambda_lu_j}_{L^2(\Omega)}\norm{R_l\tilde{g}_0}_{L^2(\Omega)}\\
    &\lesssim  \norm{u_j-R_lu_j}_{L^2(\Omega)}\norm{R_l\tilde{g}_0}_{L^2(\Omega)}.
    \end{aligned}
\end{align}
With a standard dual argument and using \eqref{estimate_g0_1p}, we may deduce
\begin{align}\label{Lemma_proper3}\begin{aligned}
    \norm{R_l \tilde{g}_0}_{L^2(\Omega)}&\leq \norm{\tilde{g}_0-R_l\tilde{g}_0}_{L^2(\Omega)}+\norm{\tilde{g}_0}_{L^2(\Omega)}\\
    &\lesssim \overline{h}_l^{r}\norm{\tilde{g}_0-R_l\tilde{g}_0}_{H^1(\Omega)}+\norm{\tilde{g}_0}_{W^{1,p}(\Omega)}\\
    &\lesssim \overline{h}_l^{r}\norm{\tilde{g}_0}_{H^1(\Omega)}+\norm{\tilde{g}_0}_{W^{1,p}(\Omega)}\\
    &\lesssim 1+\overline{h}_l^{r}\lesssim 1,
    \end{aligned}
\end{align}
where $1<p<2$ and $r<r_0$. Combining \eqref{Lemma_proper1}, \eqref{Lemma_proper2} and \eqref{Lemma_proper3}, we may complete the proof of \eqref{proper  approximation}.
\end{proof}

\begin{remark} [An a priori $L^{\infty}$-estimate]
    According to Lemma \ref{lem proper approximation}, we may obtain the a priori $L^{\infty}$-estimate for elliptic eigenvalue problems. In particular, if $\Omega$ is convex, and the mesh is qausi-uniform, we have (see \cite{i3} and references therein)
\begin{equation*}
      \norm{u_j-\Lambda_l u_j}_{L^{\infty}(\Omega)}\lesssim c_l\overline{h}_l\inf_{v_l\in V_l}\norm{u_j-v_l}_{W^{1,\infty}(\Omega)},
\end{equation*}
where $c_l=1+|\ln \overline{h}_l|$ for the linear element and $c_l=1$ for higher-order elements.
If $\Omega$ is non-convex and the mesh is local refined as in \cite{MR4435940}, we may obtain
\begin{equation*}
      \norm{u_j-\Lambda_l u_j}_{L^{\infty}(\Omega)}\lesssim c_l\inf_{v_l\in V_l}\norm{u_j-v_l}_{L^{\infty}(\Omega)}.
\end{equation*}
\end{remark}

\begin{remark}
We shall emphasize that the Ritz projection $R_l$ generally loses the stability in the sense of $L^{\infty}$-norm in non-convex domain, although $R_lu_j$ is the best approximation of $u_j$ in the sense of the energy norm. It is difficult to find a (qausi-) optimal and computable approximation in the sense of $L^{\infty}$-norm in highly graded meshes. Thus, with assuming that $R_lu_j$ is a ``good" approximation of $u_j$, we deem that it is reasonable to regard $\Lambda_l u_j$ as  a proper approximation of $u_j$ in view of \eqref{proper  approximation}.
\end{remark}

\subsection{Reliability}
We first review the weighted Sobolev stability of the $L^2$-projection $Q_l$ in \cite{MR4320894}, which provides a new method different from \cite{MR3532806} to prove the reliability of $\eta_l$ when $k\geq 2$. Let the mesh-size function $h_l$ have the grading $\gamma_l$ (see Definition 4.1 in \cite{MR4320894}). We have the following 
weighted Sobolev stability of the $L^2$-projection $Q_l$.
\begin{lemma}\label{weighted Sobolev stability}
    Assume that 
    \begin{equation}\label{grading}
        \gamma_l^{2+|1-\frac{2}{p}|}< \gamma_{max}:=\frac{\sqrt{2k+2}+\sqrt{k}}{\sqrt{2k+2}-\sqrt{k}}.
    \end{equation}
Then the $L^2$-projection $Q_l$ has the weighted Sobolev stability 
\begin{equation}\label{weightedLp}   \norm{h_l^{-2}Q_lv}_{L^p(\Omega)}\lesssim \norm{h_l^{-2}v}_{L^p(\Omega)} \ \ \ \ \forall\ v\in L^{p}(\Omega)
\end{equation}
and
 \begin{equation}\label{weightedW1p}   \norm{h_l^{-1}\nabla Q_lv}_{L^p(\Omega)}\lesssim \norm{h_l^{-1}\nabla v}_{L^p(\Omega)} \ \ \ \ \forall\ v\in W_0^{1,p}(\Omega)
\end{equation}
for $1\leq p\leq \infty$.
\end{lemma}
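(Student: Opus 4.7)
The two bounds are quoted in the excerpt as being imported from \cite{MR4320894}, so what belongs in the paper itself is essentially the citation; below I sketch how I would recover the result from scratch, which explains why the precise condition \eqref{grading} is the natural hypothesis and where the constant $\gamma_{\max}$ comes from.

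The core algebraic ingredient is a quasi-local inverse for $Q_l$. Expanding $Q_l v=\sum_z c_z\varphi_z$ in the Lagrange nodal basis, the coefficient $c_z$ depends on $v|_T$ with a decay of the form $\rho^{d(z,T)}$, where $d$ is the combinatorial distance in the element–patch graph and $\rho<1$ is controlled by the spectral spread of the reference mass matrix on a simplex of degree $k$. A sharp calculation of that spread gives $\rho\le \sqrt{k}/\sqrt{2k+2}$, and the appearance of $\gamma_{\max}=(\sqrt{2k+2}+\sqrt{k})/(\sqrt{2k+2}-\sqrt{k})$ is precisely the boundary case at which a geometric sum of the form $\sum_d (\gamma_l^{\alpha}\rho)^d$ still converges, with $\alpha=2+|1-2/p|$.

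To deduce \eqref{weightedLp}, I would argue element by element, controlling $\norm{h_l^{-2}Q_l v}_{L^p(T)}$ by a weighted sum of $\norm{h_l^{-2}v}_{L^p(T')}$ over combinatorially nearby $T'$. Two factors combine: the grading bound says that $h_l^{-2}$ can change from $T$ to a neighbour by a factor at most $\gamma_l^{2}$ per step, and an $L^2$–to–$L^p$ Riesz--Thorin interpolation through the trivial pivot $p=2$ contributes an extra $\gamma_l^{|1-2/p|}$. Multiplying these against the decay $\rho^{d(T,T')}$ yields a convergent geometric series precisely under \eqref{grading}, and its sum is the asserted bound. For \eqref{weightedW1p}, I would reduce to the previous case by subtracting a patch-wise constant: choose $c_T$ to be an average of $v$ on $\omega_T$, use $\nabla Q_l v=\nabla Q_l(v-c_T)$, and apply a standard element-level inverse estimate
\begin{equation*}
\norm{h_l^{-1}\nabla Q_l(v-c_T)}_{L^p(T)}\lesssim \norm{h_l^{-2}Q_l(v-c_T)}_{L^p(\omega_T)}.
\end{equation*}
Applying \eqref{weightedLp} to $v-c_T$ and finishing with a Poincar\'e bound on the patch converts the right-hand side into $\norm{h_l^{-1}\nabla v}_{L^p(\omega_T)}$, and summation over elements gives \eqref{weightedW1p}.

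The hard part, and the reason the argument in \cite{MR4320894} takes several pages, is the quasi-locality claim with an \emph{explicit} decay constant $\rho$. Pure local $L^2$-stability of $Q_l$ is classical and easy, but pinning down $\rho=\sqrt{k}/\sqrt{2k+2}$ in a way that can be pitted against $\gamma_l$ requires a patchwise Strang--Fix analysis tied directly to the BiSecLG(1) refinement tree; this is what forces \eqref{grading} to be formulated for that specific refinement rule, as already flagged in the preceding Remark.
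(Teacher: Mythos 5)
The paper offers no proof of this lemma at all: it is imported verbatim from \cite{MR4320894} (the surrounding text reads ``We first review the weighted Sobolev stability of the $L^2$-projection $Q_l$ in \cite{MR4320894}''), so your decision to treat the citation as the substance of the proof coincides exactly with what the paper does. One caution about your optional reconstruction: a decay rate $\rho=\sqrt{k}/\sqrt{2k+2}$ played off against $\gamma_l^{2+|1-2/p|}$ would yield the threshold $\sqrt{(2k+2)/k}$ rather than the stated $\gamma_{max}=(\sqrt{2k+2}+\sqrt{k})/(\sqrt{2k+2}-\sqrt{k})$ (for $k=1$: $2$ versus $3$); the stated constant corresponds to a decay rate of the Demko--Moss--Smith form $(\sqrt{\kappa}-1)/(\sqrt{\kappa}+1)$ with $\kappa=(2k+2)/k$ a bound on the condition number of the diagonally preconditioned mass matrix, so your sketch as written would establish the lemma only under a strictly more restrictive grading condition than \eqref{grading}.
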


\par For $j\in J$, let $x_0\in \overline{\Omega}$ satisfy 
\begin{equation}\label{defK0}
    \vert e_j(x_0)\vert=\Vert e_j\Vert_{L^{\infty}(\Omega)}.
\end{equation}
For the subsequent analysis, our first step is to prove 
\begin{equation}\label{assosiation111}
    \Vert e_j\Vert_{L^{\infty}(\Omega)}\lesssim \vert (\delta_0,e_j)\vert,
\end{equation}
where $\delta_0$ is defined in \eqref{prodelta1} and \eqref{prodelta2}. 
We may use a similar argument as in \cite{1} to get the following lemma. For simplicity, we omit the detailed proof. 
\begin{lemma}\label{lem nochetto}
Let $0< \alpha\leq 1$ be the H{\"o}lder continuity exponent of the eigenfunction $u_j$. If the mesh-size  $\overline{h}_l$ is small enough, the following estimate holds
\begin{equation}\label{associate}
    \Vert e_j\Vert_{L^{\infty}(\Omega)}\lesssim
    \vert (\delta_0,e_j)\vert + h_{T_0}^{\alpha\beta},
\end{equation}
where $T_0$ is an element containing $x_0$ and $\beta\geq 1$.
\end{lemma}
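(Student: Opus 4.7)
My plan is to reduce $\Vert e_j\Vert_{L^\infty(\Omega)}$ to the weighted average $(\delta_0, e_j)$ by controlling the pointwise oscillation of $e_j$ on the small ball $B(x_0,\rho)$ of radius $\rho = h_{T_0}^{\beta}$. Without loss of generality assume $e_j(x_0) = \Vert e_j\Vert_{L^\infty(\Omega)}$. Since $\int_\Omega \delta_0\,dx = 1$, I first write
\begin{equation*}
\Vert e_j\Vert_{L^\infty(\Omega)} - (\delta_0, e_j) = \int_{B(x_0,\rho)} \delta_0(y)\bigl(e_j(x_0) - e_j(y)\bigr)\,dy,
\end{equation*}
so that $\Vert e_j\Vert_{L^\infty(\Omega)} \leq |(\delta_0, e_j)| + \sup_{y\in B(x_0,\rho)} |e_j(x_0) - e_j(y)|$, using only $\Vert\delta_0\Vert_{L^1(\Omega)}=1$ from \eqref{prodelta1}.

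I then split the oscillation via $e_j = u_j - \Lambda_l u_j$. The continuous part is handled by the H{\"o}lder continuity of $u_j$: as already established within the proof of Lemma \ref{lemma lower bound}, combining the $W^{2,p}$-regularity \eqref{priori} with Sobolev embedding yields $u_j\in C^{0,\alpha}(\overline{\Omega})$, so $|u_j(x_0) - u_j(y)|\lesssim |x_0 - y|^{\alpha}\leq \rho^{\alpha} = h_{T_0}^{\alpha\beta}$. For the piecewise polynomial $\Lambda_l u_j\in V_l$, I invoke the local inverse estimate on each element $T$ meeting $B(x_0,\rho)$. Because $\beta\geq 1$ implies $\rho\leq h_{T_0}$ and the mesh is shape-regular, all such $T$ satisfy $h_T\simeq h_{T_0}$; letting $\omega_{T_0}$ denote the element patch covering $B(x_0,\rho)$,
\begin{equation*}
|\Lambda_l u_j(x_0) - \Lambda_l u_j(y)| \leq \rho\,\Vert\nabla\Lambda_l u_j\Vert_{L^\infty(\omega_{T_0})} \lesssim h_{T_0}^{\beta-1}\Vert\Lambda_l u_j\Vert_{L^\infty(\Omega)} \lesssim h_{T_0}^{\beta-1}\bigl(1 + \Vert e_j\Vert_{L^\infty(\Omega)}\bigr),
\end{equation*}
where I have used $\Vert\Lambda_l u_j\Vert_{L^\infty(\Omega)}\leq \Vert u_j\Vert_{L^\infty(\Omega)} + \Vert e_j\Vert_{L^\infty(\Omega)}$ and the a priori bound $\Vert u_j\Vert_{L^\infty(\Omega)}\lesssim 1$ obtained in Lemma \ref{lemma lower bound}.

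Combining the two bounds gives an inequality of the shape
\begin{equation*}
\Vert e_j\Vert_{L^\infty(\Omega)} \leq |(\delta_0, e_j)| + C h_{T_0}^{\alpha\beta} + C h_{T_0}^{\beta-1} + C h_{T_0}^{\beta-1}\Vert e_j\Vert_{L^\infty(\Omega)}.
\end{equation*}
For $\overline{h}_l$ small enough the prefactor $C h_{T_0}^{\beta-1}$ can be made as small as needed, so the last term is absorbed into the left-hand side, and the residual $h_{T_0}^{\beta-1}$ is controlled by $h_{T_0}^{\alpha\beta}$ up to constants (with a suitable choice of $\beta$), producing \eqref{associate}. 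The principal obstacle is precisely this oscillation control for $\Lambda_l u_j$: since $\Lambda_l u_j$ is only globally continuous, not H{\"o}lder, the jump between $x_0$ and a nearby $y$ has to be bridged by an inverse inequality combined with a uniform $L^\infty$-bound on $\Lambda_l u_j$; shape-regularity of the (nested) mesh is then indispensable to handle the case where $B(x_0,\rho)$ crosses element boundaries, which is the delicate step that limits how one may choose $\beta$ and ultimately forces the smallness assumption on $\overline{h}_l$.
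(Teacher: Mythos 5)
Your skeleton — write $\Vert e_j\Vert_{L^\infty(\Omega)}-(\delta_0,e_j)=\int\delta_0(y)\bigl(e_j(x_0)-e_j(y)\bigr)\,dy$, bound the oscillation of $u_j$ by H\"older continuity and the oscillation of the discrete part by an inverse estimate, then absorb — is exactly the argument from \cite{1} that the paper invokes (it omits the details). The genuine gap is in your treatment of the discrete oscillation. Bounding $\Vert\nabla\Lambda_l u_j\Vert_{L^\infty(\omega_{T_0})}\lesssim h_{T_0}^{-1}\Vert\Lambda_l u_j\Vert_{L^\infty(\Omega)}\lesssim h_{T_0}^{-1}(1+\Vert e_j\Vert_{L^\infty(\Omega)})$ leaves the term $Ch_{T_0}^{\beta-1}$ coming from $\Vert u_j\Vert_{L^\infty(\Omega)}\lesssim 1$, and your final claim that this residual is controlled by $h_{T_0}^{\alpha\beta}$ is false on the range the lemma asserts: $h_{T_0}^{\beta-1}\le h_{T_0}^{\alpha\beta}$ requires $\beta(1-\alpha)\ge 1$, which is impossible when $\alpha=1$, fails for every $\beta$ close to $1$, and is incompatible with the choice $\beta\approx\frac{2}{\alpha}\frac{|\ln\underline{h}_l|}{|\ln\overline{h}_l|}$ made right after the lemma (e.g.\ quasi-uniform mesh and $\alpha$ near $1$). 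So as written your argument proves a strictly weaker statement than \eqref{associate}.

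The repair is to exploit that the inverse estimate is invariant under subtracting constants: $\Vert\nabla\Lambda_l u_j\Vert_{L^\infty(T)}\lesssim h_T^{-1}\Vert\Lambda_l u_j-u_j(x_0)\Vert_{L^\infty(T)}\le h_T^{-1}\bigl(\Vert e_j\Vert_{L^\infty(\Omega)}+\sup_{x\in T}|u_j(x)-u_j(x_0)|\bigr)\lesssim h_T^{-1}\bigl(\Vert e_j\Vert_{L^\infty(\Omega)}+h_T^{\alpha}\bigr)$ for the elements $T$ meeting $B(x_0,\rho)$ (which have $h_T\simeq h_{T_0}$, and lie within distance $\lesssim h_{T_0}$ of $x_0$). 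This yields an oscillation bound $\lesssim h_{T_0}^{\beta-1+\alpha}+h_{T_0}^{\beta-1}\Vert e_j\Vert_{L^\infty(\Omega)}$, and now $h_{T_0}^{\beta-1+\alpha}\le h_{T_0}^{\alpha\beta}$ holds for all $\beta\ge 1$, $\alpha\le 1$, since $(\beta-1)(1-\alpha)\ge 0$; the remaining term is absorbed as you propose (note, though, that absorption needs $Ch_{T_0}^{\beta-1}\le\frac12$, which for $\beta=1$ does not follow from smallness of $\overline{h}_l$ — in the paper this is harmless because the subsequent application takes $\beta>\frac{2}{\alpha}\ge 2$). A further point worth a sentence in a complete write-up: for a nonconvex (e.g.\ slit) polygon the straight segment from $x_0$ to $y$ may leave $\Omega$, so the Lipschitz bound for $\Lambda_l u_j$ should be stated along paths in $\overline{\Omega}$ (or with $\rho$ small relative to the local geometry), a caveat your patch argument currently glosses over.
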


\par In order to drop the second term in \eqref{associate}, we use the  a priori lower bound of $\Vert e_j\Vert_{L^{\infty}(\Omega)}$. Thanks to Lemmas \ref{lemma lower bound} and \ref{lem nochetto}, choosing $\beta > \frac{2}{\alpha}\frac{|\ln{\underline{h}_l}|}{|\ln \overline{h}_l|}$, we may deduce \eqref{assosiation111}. Note that 
\begin{equation}\label{g0W21}
    \norm{g_0}_{W^{2,1}(\Omega)}\lesssim \frac{|\ln{\underline{h}_l}|^2}{|\ln{\overline{h}_l}|}
\end{equation}
under the choice of $\beta$ and in view of \eqref{smooth2}. 
\par Next we shall derive the upper bound of $\Vert e_j\Vert_{L^{\infty}(\Omega)}$. 
Elementary algebraic manipulations lead to
\begin{align}\label{align_dual_delta0_ej}
    (\delta_0,e_j)=a(e_j,g_0)&=a(u_j,g_0)-a(\Lambda_l u_j,Q_l g_0)+a(\Lambda_l u_j,Q_l g_0-g_0).
\end{align}
With the aid of Lemmas \ref{lemma u-Lambda_l u}, \ref{lemma Lambda_l} and \ref{weighted Sobolev stability}, using the standard interpolation theory, we may obtain
\begin{align}\label{A1}
\begin{aligned}
       a(e_j,g_0)&=\lambda_j \left( (u_j,g_0)-(P_l u_j,Q_l g_0)\right)+a(\Lambda_l u_j,Q_l g_0-g_0) \\ 
    &=\lambda_j (u_j-P_l u_j,g_0)+a(\Lambda_l u_j,Q_l g_0-g_0)\\
  &\lesssim \lambda_j \Vert u_j-P_l u_j\Vert_{L^{2}(\Omega)}\norm{g_0}_{L^2(\Omega)}\\
  &\ \ \ \ + \eta_{l,j}^*\left(\norm{h_l^{-2}(g_0-Q_lg_0)}_{L^1(\Omega)}+\norm{h_l^{-1}(g_0-Q_lg_0)}_{W^{1,1}(\Omega)}\right)\\
    &\lesssim  \Vert u_j-R_l u_j\Vert_{L^{2}(\Omega)}\norm{g_0}_{L^2(\Omega)}+ |g_0|_{W^{2,1}(\Omega)}\eta_{l,j}^*\\
    &\lesssim \Vert u_j-R_l u_j\Vert_{L^{2}(\Omega)}+ \frac{|\ln{\underline{h}_l}|^2}{|\ln{\overline{h}_l}|}\eta_{l,j}^*.
\end{aligned}
\end{align}
In some a posteriori error analysis in the sense of energy norm (see \cite{MR3347459}),  $\norm{u_j-R_lu_j}_{L^2(\Omega)}$ may be controlled by the a priori estimate
\begin{equation*}
    \norm{u_j-R_lu_j}_{L^2(\Omega)}\lesssim \overline{h}_l^r\norm{u_j-R_lu_j}_{H^1(\Omega)},
\end{equation*}
where $r>0$ is some regularity parameter. However, in the error analysis in the sense of $L^\infty$-norm, it seems difficult to bound the $L^2$-norm error through some a priori estimates. Instead, we may use some a posteriori estimates to bound this term. 

Using the similar argument as in \cite{1} and combining the weighted Sobolev stability of $L^2$-projection, we may get the following a posteriori error estimate.  
\begin{lemma}\label{posteriori source}
    Let $u$ solve the source problem 
  \begin{align}\label{Laplace source}
	\begin{cases}
		-\Delta u=f \ \ &\text{in} \ \Omega,\\
		u=0 \ \ &\text{on} \  \partial \Omega.
	\end{cases}
\end{align}  
Then we have 
    \begin{equation*}
     \norm{u-R_lu}_{L^{p}(\Omega)}\lesssim  \hat{\eta}_{l,p}
    \end{equation*}
for $4<p<\infty$, where $\hat{\eta}_{l,p}$ is defined as
    \begin{equation*}
  \hat{\eta}_{l,p}:=   \inf_{v_l\in V_l}\left(\sum_{T\in \mathcal{T}_l}\left(\norm{h_T^2(f+\Delta R_lu+v_l)}_{L^p(T)}^p+\norm{h_T^{1+\frac{1}{p}}[\![\frac{\partial R_l u}{\partial n}]\!]}_{L^{p}(\partial T\backslash \partial \Omega)}^p\right)\right)^{\frac{1}{p}}.    
    \end{equation*}
\end{lemma}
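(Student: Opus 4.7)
The plan is to run a duality argument in $L^p$--$L^{p'}$ combined with an element-wise integration by parts and the weighted Sobolev stability of the $L^2$-projection provided by Lemma \ref{weighted Sobolev stability}. Setting $p':=p/(p-1)\in(1,4/3)$, I would begin by writing
$$\norm{u-R_lu}_{L^p(\Omega)}=\sup_{\norm{g}_{L^{p'}(\Omega)}\le 1}(u-R_lu,g)$$
and introducing the dual problem $-\Delta w=g$ in $\Omega$, $w=0$ on $\partial\Omega$. Because $p'<4/3$, the $W^{2,p'}$-regularity estimate \eqref{priori} delivers $\vert w\vert_{W^{2,p'}(\Omega)}\lesssim\norm{g}_{L^{p'}(\Omega)}$. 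Galerkin orthogonality of $R_l$ then permits subtracting $Q_lw\in V_l$, and integrating by parts element by element would give, for any $v_l\in V_l$,
$$(u-R_lu,g)=\sum_{T\in\mathcal{T}_l}\int_T(f+\Delta R_lu+v_l)(w-Q_lw)-\sum_{T\in\mathcal{T}_l}\int_{\partial T\setminus\partial\Omega}[\![\frac{\partial R_lu}{\partial n}]\!](w-Q_lw),$$
the insertion of $v_l$ being legitimate because $(v_l,w-Q_lw)=0$ by the defining property of $Q_l$.

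Next I would apply H\"older's inequality locally followed by discrete H\"older in the $p$--$p'$ sums. The volume contribution is dominated by
$$\Bigl(\sum_T\norm{h_T^2(f+\Delta R_lu+v_l)}_{L^p(T)}^p\Bigr)^{1/p}\cdot\norm{h_l^{-2}(w-Q_lw)}_{L^{p'}(\Omega)},$$
and a scaled trace inequality on each interior edge, together with the algebraic identity $1+\tfrac{1}{p}+\tfrac{1}{p'}=2$, controls the edge contribution by the corresponding $\ell^p$-sum of $h_T^{1+1/p}\norm{[\![\partial R_lu/\partial n]\!]}_{L^p(\partial T\setminus\partial\Omega)}$ multiplied by $\norm{h_l^{-2}(w-Q_lw)}_{L^{p'}(\Omega)}+\norm{h_l^{-1}\nabla(w-Q_lw)}_{L^{p'}(\Omega)}$. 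The decisive step is then to bound both weighted norms of $w-Q_lw$ by $\vert w\vert_{W^{2,p'}(\Omega)}$. For this I would write $w-Q_lw=(w-I_lw)+Q_l(I_lw-w)$ with $I_l$ a Scott--Zhang interpolant, apply \eqref{weightedLp} and \eqref{weightedW1p} (with $p$ replaced by $p'$) to the projected part, and invoke classical local interpolation bounds on $w-I_lw$ to obtain
$$\norm{h_l^{-2}(w-Q_lw)}_{L^{p'}(\Omega)}+\norm{h_l^{-1}\nabla(w-Q_lw)}_{L^{p'}(\Omega)}\lesssim\vert w\vert_{W^{2,p'}(\Omega)}\lesssim\norm{g}_{L^{p'}(\Omega)}.$$
Taking the infimum over $v_l\in V_l$ and the supremum over $g$ would complete the proof.

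The principal obstacle, in my view, is precisely this last step: since $Q_l$ is non-local, the standard local interpolation estimates are not directly available, and one must genuinely invoke the weighted Sobolev stability of Lemma \ref{weighted Sobolev stability}, which is the reason the BiSec\-LG$(1)$ grading condition was imposed on the refinement strategy. A secondary subtlety is that the dual problem must enjoy full $W^{2,p'}$-regularity on the arbitrary polygonal domain $\Omega$; via \eqref{priori} this forces $p'<4/3$, and hence the hypothesis $p>4$ in the statement. Once these two ingredients are in hand, the remainder of the argument reduces to a careful book-keeping of the $h_T$-powers in the element and edge residuals.
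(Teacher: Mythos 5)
Your argument is correct and follows essentially the same route as the paper: an $L^p$--$L^{p'}$ duality argument with the dual solution in $W^{2,p'}$ via \eqref{priori} (which is exactly what forces $p>4$), Galerkin orthogonality used to subtract $Q_l\omega$, element-wise integration by parts with H\"older and scaled trace inequalities, and the weighted Sobolev stability of $Q_l$ from Lemma \ref{weighted Sobolev stability} to bound the weighted norms of $\omega-Q_l\omega$ by $|\omega|_{W^{2,p'}(\Omega)}$. The only cosmetic difference is that the paper tests directly with the extremal datum $\mathrm{sgn}(u-R_lu)|u-R_lu|^{p-1}$, producing $\norm{u-R_lu}_{L^p(\Omega)}^p$ on the left, whereas you take a supremum over normalized $g\in L^{p'}(\Omega)$; the two formulations are equivalent.
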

\begin{proof}
For $4<p<\infty$, let $\omega$ solve
  \begin{align*}
	\begin{cases}
		-\Delta \omega=\text{sgn}(u-R_lu)|u-R_lu|^{p-1} \ \ &\text{in} \ \Omega,\\
		\omega=0 \ \ &\text{on} \  \partial \Omega.
	\end{cases}
\end{align*} 
By a standard dual argument, integrating by parts and using the interpolation theory, the trace theorem and \eqref{priori}, we deduce
\begin{align*}
    \norm{u-R_lu}_{L^p(\Omega)}^p &=(u-R_lu,\text{sgn}(u-R_lu)|u-R_lu|^{p-1})\\
    &=a(u-R_lu,\omega-Q_l\omega)\\
    &=\sum_{T\in \mathcal{T}_l} \left( (u-R_lu,\omega-Q_l\omega)_T+ \langle [\![\frac{\partial R_lu}{\partial n}]\!],\omega-Q_l\omega\rangle_{\partial T\backslash \partial \Omega}\right)\\
    &\lesssim |\omega|_{W^{2,q}(\Omega)}\hat{\eta}_{l,p}\lesssim \norm{u-R_lu}_{L^p(\Omega)}^{p-1}\hat{\eta}_{l,p},
\end{align*}
where $\frac{1}{p}+\frac{1}{q}=1$. Thus, we have
\begin{equation*}
     \norm{u-R_lu}_{L^{p}(\Omega)}\lesssim  \hat{\eta}_{l,p}
\end{equation*}
for any $4<p<\infty$.
\end{proof}

Taking $u=u_j$, $f=\lambda_ju_j$ and $v_{l}=-\lambda_{j}R_{l}u_{j}+w_{l}$ for any $w_{l}\in V_{l}$, a direct triangle inequality leads to 
\begin{align*}
            \norm{u_j-R_lu_j}_{L^{p}(\Omega)}^p&\lesssim  
    \inf_{w_l\in V_l}\sum_{T\in \mathcal{T}_l}\left(\norm{h_T^2(\Delta R_lu_j+w_l)}_{L^p(T)}^p+\norm{h_T^{1+\frac{1}{p}}[\![\frac{\partial R_l u_j}{\partial n}]\!]}_{L^{p}(\partial T\backslash \partial \Omega)}^p\right)    \\
 &\ \ \ \ + \norm{h_l^2\lambda_j(u_j-R_lu_j)}_{L^{p}(\Omega)}^p
\end{align*}
for $4<p<\infty$. For small enough $\overline{h}_l$, we may infer that
\begin{equation}\label{Corollary1}
           \norm{u_j-R_lu_j}_{L^{p}(\Omega)}\lesssim  \hat{\eta}_{l,j},
\end{equation}
where $\hat{\eta}_{l,j}$ is defined as 
\begin{equation*}
  \hat{\eta}_{l,j}:=   \inf_{w_l\in V_l}\max_{T\in \mathcal{T}_l}\left(h_T^2\norm{\Delta R_lu_j+w_l}_{L^{\infty}(T)}^p+h_T\norm{[\![\frac{\partial R_l u_j}{\partial n}]\!]}_{L^{\infty}(\partial T\backslash \partial \Omega)}\right).    
\end{equation*}
Notice that \eqref{Corollary1} is valid for all $1<p<\infty$. Combining \eqref{assosiation111}, \eqref{align_dual_delta0_ej}, \eqref{A1} and \eqref{Corollary1}, we may conclude
\begin{align}\label{upper bound}
\norm{e_j}_{L^{\infty}(\Omega)}&\lesssim \max_{T\in\mathcal{T}_l}\left(h_T^2\norm{\Delta (R_lu_j-\Lambda_l u_j)}_{L^\infty(T)}+h_T\norm{[\![\frac{\partial}{\partial n }(R_lu_j-\Lambda_l u_j)]\!]}_{L^{\infty}(\partial T\backslash \partial \Omega)}\right)\notag\\
&\ \ \ \ +\frac{|\ln{\underline{h}_l}|^2}{|\ln{\overline{h}_l}|}\eta_{l,j}^*.
\end{align}
Now we show that the first term in \eqref{upper bound} is a higher order quantity.

\begin{lemma}\label{lem Rlu -Lambdal u}
For any $T\in \mathcal{T}_l$, it holds that
   \begin{equation}\label{R_lu -Lambda_l u}
  \norm{\nabla (R_l u_j-\Lambda_l u_j)}_{L^{\infty}(T)}\lesssim h_T^{-1}\overline{h}_l^r\norm{e_j}_{L^{\infty}(\Omega)}
\end{equation}
and 
\begin{equation}\label{R_lu -Lambda_l u 2}
    \norm{\Delta (R_lu_j-\Lambda_l u_j)}_{L^{\infty}(T)}\lesssim h_T^{-2}\overline{h}_l^r\norm{e_j}_{L^{\infty}(\Omega)}
\end{equation}
for any $r<r_0$.
\end{lemma}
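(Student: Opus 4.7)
The plan is to derive \eqref{R_lu -Lambda_l u} by representing the pointwise directional derivative of $v_l := R_l u_j - \Lambda_l u_j$ through the regularized derivative Green's function $g_1$ introduced in \eqref{def_g1}, and then to obtain \eqref{R_lu -Lambda_l u 2} by a standard inverse estimate, since $v_l$ is a polynomial of degree $\leq k$ on $T$.

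For the first estimate, fix an arbitrary $x_T\in T$ and an arbitrary unit vector $\nu$, and let $\delta_1$ and $g_1$ be the associated objects from \eqref{prodT}--\eqref{def_g1}. Since $\nabla v_l\cdot \nu\in P_{k-1}(T)\subset P_k(T)$, the defining property of $\delta_1$ together with integration by parts (the boundary term vanishes because $\delta_1\in C_0^\infty(T)$) gives
\begin{equation*}
    \nabla v_l(x_T)\cdot\nu=(\delta_1,\nabla v_l\cdot\nu)_T=-(\nabla \delta_1\cdot\nu,v_l)_\Omega=-(-\Delta g_1,v_l)_\Omega=-a(v_l,g_1)=-a(v_l,R_l g_1),
\end{equation*}
where in the last step we used $v_l\in V_l$ and the symmetry of $a(\cdot,\cdot)$ combined with the definition \eqref{def_R_l} of $R_l$. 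The key algebraic identity is then obtained by plugging in $v_l = R_l u_j-\Lambda_l u_j$: using Ritz orthogonality $a(R_l u_j,R_l g_1)=a(u_j,R_l g_1)$, the weak form of the eigenvalue equation \eqref{Laplace_variation} $a(u_j,R_l g_1)=\lambda_j(u_j,R_l g_1)$, and Lemma \ref{lemma Lambda_l}, we get
\begin{equation*}
    a(v_l,R_l g_1)=\lambda_j\bigl((u_j,R_l g_1)-(P_l u_j,R_l g_1)\bigr)=\lambda_j(u_j-P_l u_j,R_l g_1).
\end{equation*}

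A Cauchy--Schwarz inequality now reduces the bound to controlling $\norm{u_j-P_l u_j}_{L^2(\Omega)}$ and $\norm{R_l g_1}_{L^2(\Omega)}$. Since $\Lambda_l u_j\in W_l$ and $P_l$ is the $L^2$-projection onto $W_l$, we have
\begin{equation*}
    \norm{u_j-P_l u_j}_{L^2(\Omega)}\leq \norm{u_j-\Lambda_l u_j}_{L^2(\Omega)}=\norm{e_j}_{L^2(\Omega)}\lesssim \norm{e_j}_{L^\infty(\Omega)},
\end{equation*}
while Lemma \ref{lem estR_lg_1} yields $\norm{R_l g_1}_{L^2(\Omega)}\lesssim \overline{h}_l^{\,r}h_T^{-1}$ for any $r<r_0$. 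Combining these three ingredients and taking the supremum over $x_T\in T$ and unit vectors $\nu$ gives \eqref{R_lu -Lambda_l u}. Finally, \eqref{R_lu -Lambda_l u 2} follows immediately: $v_l|_T$ is a polynomial of degree $\leq k$, so applying the standard inverse estimate component-wise to $\nabla v_l$ (a polynomial of degree $\leq k-1$) produces $\norm{\Delta v_l}_{L^\infty(T)}\lesssim h_T^{-1}\norm{\nabla v_l}_{L^\infty(T)}$, and \eqref{R_lu -Lambda_l u} closes the argument.

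The main obstacle is the second step: without the identity $a(\Lambda_l u_j,R_l g_1)=\lambda_j(P_l u_j,R_l g_1)$ from Lemma \ref{lemma Lambda_l}, one would only reach $a(v_l,R_l g_1)=a(u_j-\Lambda_l u_j,R_l g_1)$, which can be controlled only through $|R_l g_1|_{H^1}\lesssim h_T^{-1}$ (giving no gain in $\overline{h}_l$) rather than through the much smaller $\norm{R_l g_1}_{L^2}\lesssim \overline{h}_l^{\,r}h_T^{-1}$. The higher-order factor $\overline{h}_l^{\,r}$ thus hinges on reducing the problem to an $L^2$-pairing, which is precisely what the variational identity for $\Lambda_l u_j$ allows.
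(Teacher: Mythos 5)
Your proof is correct and follows essentially the same route as the paper: represent $\nabla(R_lu_j-\Lambda_lu_j)\cdot\nu$ at a maximizing point via $\delta_1$ and $R_lg_1$, convert to $\lambda_j(u_j-P_lu_j,R_lg_1)$ using Ritz orthogonality, the eigenvalue equation and Lemma \ref{lemma Lambda_l}, then apply Cauchy--Schwarz with the best-approximation property of $P_l$ and the bound $\Vert R_lg_1\Vert_{L^2(\Omega)}\lesssim \overline{h}_l^{r}h_T^{-1}$ from Lemma \ref{lem estR_lg_1}, finishing \eqref{R_lu -Lambda_l u 2} by an inverse estimate. Your write-up merely makes explicit some details the paper leaves implicit (the sign from integration by parts, the bound $\norm{e_j}_{L^2(\Omega)}\lesssim\norm{e_j}_{L^\infty(\Omega)}$), so no further changes are needed.
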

\begin{proof}
For any $T\in \mathcal{T}_l$, let $x_T\in T$ satisfy
\begin{align*}
  \norm{\nabla (R_l u_j-\Lambda_l u_j)}_{L^{\infty}(T)}\lesssim \sup_{\nu} |(\nabla(R_l u_j-\Lambda_l u_j)\cdot {\nu}
 )(x_T)|,
\end{align*}
where $\nu$ is an arbitrary direction vector. Let $\delta_1$ be defined as in \eqref{prodT} associated with $x_T$.
Using Lemmas \ref{lemma Lambda_l} and \ref{lem estR_lg_1},  we deduce 
\begin{align*}\label{difference}
\begin{aligned}
	(\nabla(R_l u_j-\Lambda_l u_j)\cdot {\nu}
 )(x_T)&=(R_l u_j-\Lambda_l u_j,\nabla \delta_1 \cdot {\nu})\\
 &=a(R_l u_j-\Lambda_l u_j,R_l g_1)
	=\lambda_j(u_j-P_l u_j,R_l g_1)\\
    &\leq \Vert u_j-P_l u_j\Vert_{L^{2}(\Omega)}\Vert R_l g_1\Vert_{L^2(\Omega)}\\
    &\leq \Vert u_j-\Lambda_l u_j\Vert_{L^{2}(\Omega)}\Vert R_l g_1\Vert_{L^2(\Omega)}\\
    &\lesssim h_T^{-1}\overline{h}_l^{r}\norm{e_j}_{L^{\infty}(\Omega)}
\end{aligned}
\end{align*}
for any $r<r_0$,
which leads to \eqref{R_lu -Lambda_l u}. Then we may get \eqref{R_lu -Lambda_l u 2} through a standard inverse estimate. 
\end{proof}

\begin{corollary}[Reliability]\label{cor reliability}
Assume  \eqref{eq:H2} holds. If the mesh-size $\overline{h}_l$ is small enough, it holds that
\begin{equation*}
    \norm{e_j}_{L^{\infty}(\Omega)}\lesssim \frac{|\ln{\underline{h}_l}|^2}{|\ln{\overline{h}_l}|}\frac{\lambda_{n+L}}{\lambda_{n+1}}\eta_l
\end{equation*}
for any $j\in J$.
\end{corollary}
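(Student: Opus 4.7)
The starting point is inequality \eqref{upper bound}, which was already assembled in subsection 4.2:
\begin{equation*}
\norm{e_j}_{L^{\infty}(\Omega)} \lesssim \max_{T\in\mathcal{T}_l}\!\left(h_T^2\norm{\Delta(R_l u_j-\Lambda_l u_j)}_{L^{\infty}(T)}+h_T\norm{[\![\partial_n(R_l u_j-\Lambda_l u_j)]\!]}_{L^{\infty}(\partial T\setminus\partial\Omega)}\right)+\frac{|\ln \underline{h}_l|^2}{|\ln \overline{h}_l|}\eta_{l,j}^{*}.
\end{equation*}
So the task splits into two independent pieces: (i) absorb the ``higher-order'' residual of $R_l u_j-\Lambda_l u_j$ into the left-hand side, and (ii) bound the theoretical estimator $\eta_{l,j}^{*}$ by the computable one $\eta_l$ with the stated factor $\lambda_{n+L}/\lambda_{n+1}$.

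For step (i), I would plug in Lemma \ref{lem Rlu -Lambdal u} directly: the volume term is controlled by $\overline{h}_l^{r}\norm{e_j}_{L^{\infty}(\Omega)}$ via \eqref{R_lu -Lambda_l u 2}, and the edge jump is bounded by taking the worst $\norm{\nabla(R_l u_j-\Lambda_l u_j)}_{L^{\infty}}$ over the (at most two) elements sharing each edge, which by \eqref{R_lu -Lambda_l u} also yields $\overline{h}_l^{r}\norm{e_j}_{L^{\infty}(\Omega)}$ after multiplying by $h_T$. Since $r>0$, for $\overline{h}_l$ small enough this bracketed quantity is absorbed into the left-hand side, leaving $\norm{e_j}_{L^{\infty}(\Omega)}\lesssim \frac{|\ln \underline{h}_l|^2}{|\ln \overline{h}_l|}\,\eta_{l,j}^{*}$.

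For step (ii), I would expand $\Lambda_l u_j=P_l R_l u_j$ in the $L^2$-orthonormal discrete eigenbasis of $W_l$,
\begin{equation*}
\Lambda_l u_j=\sum_{i\in J} c_{ij}\,u_{l,i},\qquad c_{ij}:=(R_l u_j,u_{l,i}),
\end{equation*}
and choose the admissible test function $v_l:=\sum_{i\in J} c_{ij}\lambda_{l,i}u_{l,i}\in V_l$ in the infimum defining $\eta_{l,j}^{*}$. Elementwise this gives $\Delta\Lambda_l u_j+v_l=\sum_i c_{ij}(\Delta u_{l,i}+\lambda_{l,i}u_{l,i})$ and $[\![\partial_n\Lambda_l u_j]\!]=\sum_i c_{ij}[\![\partial_n u_{l,i}]\!]$, so by the triangle inequality and the definition \eqref{def eta} of $\eta_l$,
\begin{equation*}
\eta_{l,j}^{*}\leq \sum_{i\in J}|c_{ij}|\,\eta_l.
\end{equation*}
To evaluate $c_{ij}$, I combine Galerkin orthogonality with the discrete eigenvalue relation: $\lambda_{l,i}(R_l u_j,u_{l,i})=a(R_l u_j,u_{l,i})=a(u_j,u_{l,i})=\lambda_j(u_j,u_{l,i})$, so $c_{ij}=(\lambda_j/\lambda_{l,i})(u_j,u_{l,i})$. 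Bessel's inequality gives $\sum_{i\in J}(u_j,u_{l,i})^2\leq 1$, Cauchy--Schwarz then yields $\sum_i|(u_j,u_{l,i})|\leq \sqrt{L}$, and the min--max principle (together with convergence of discrete eigenvalues under \eqref{eq:H2} and small $\overline{h}_l$) gives $\lambda_{l,i}\geq \lambda_i\geq \lambda_{n+1}$ for $i\in J$. Combining,
\begin{equation*}
\sum_{i\in J}|c_{ij}|\leq \frac{\lambda_j}{\lambda_{n+1}}\sqrt{L}\leq \sqrt{L}\,\frac{\lambda_{n+L}}{\lambda_{n+1}}\lesssim \frac{\lambda_{n+L}}{\lambda_{n+1}},
\end{equation*}
where $\sqrt{L}$ is absorbed into the hidden constant since $L$ is fixed. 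Inserting $\eta_{l,j}^{*}\lesssim(\lambda_{n+L}/\lambda_{n+1})\eta_l$ into the reduced reliability inequality concludes the proof.

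\textbf{Anticipated obstacle.} The subtle point is the bespoke choice of $v_l=\sum_i c_{ij}\lambda_{l,i}u_{l,i}$ in the infimum defining $\eta_{l,j}^{*}$ together with the precise evaluation of $c_{ij}$ through Galerkin orthogonality. Only with this eigenvalue-weighted recombination does the cluster ratio $\lambda_{n+L}/\lambda_{n+1}$ emerge naturally; a cruder choice (e.g.\ $v_l=-\lambda_j P_l u_j$) would introduce cross terms between the basis functions and ruin the cluster-uniform constant. The remainder of the proof is routine once the higher-order estimates of Lemma \ref{lem Rlu -Lambdal u} and the absorption step are in hand.
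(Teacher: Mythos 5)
Your proposal is correct and follows essentially the same route as the paper: first absorb the higher-order residual of $R_l u_j-\Lambda_l u_j$ in \eqref{upper bound} via Lemma \ref{lem Rlu -Lambdal u} for $\overline{h}_l$ small, then take $v_l=\sum_{i\in J}c_{ij}\lambda_{l,i}u_{l,i}$ in \eqref{def eta*} with $c_{ij}=\lambda_{l,i}^{-1}\lambda_j(u_j,u_{l,i})$ evaluated through Galerkin orthogonality. The only (harmless) deviation is that the paper bounds $\max_{i\in J}|c_{ij}|\leq \lambda_{n+L}/\lambda_{n+1}$ and exploits that the computable estimator $\eta_l$ already sums over $i\in J$, which avoids the extra $\sqrt{L}$ that you generate via Bessel and Cauchy--Schwarz and then absorb into the hidden constant.
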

\begin{proof}
   Combining \eqref{upper bound} and Lemma \ref{lem Rlu -Lambdal u}, we get
    \begin{equation*}
         \norm{e_j}_{L^{\infty}(\Omega)}\lesssim \frac{|\ln{\underline{h}_l}|^2}{|\ln{\overline{h}_l}|}\eta_{l,j}^*. 
    \end{equation*}
We expand $\Lambda_lu_j=\sum_{i\in J} \alpha_iu_{l,i}$ with coefficients $\alpha_i=(R_lu_k,u_{l,i})=\lambda_{l,i}^{-1}\lambda_j(u_j,u_{l,i})\leq \frac{\lambda_{n+L}}{\lambda_{n+1}}$. Taking $v_l=\sum_{i\in J}\alpha_i \lambda_{l,i}u_{l,i}$ in \eqref{def eta*} and using the triangle inequality, we complete the proof.
\end{proof}

\begin{remark}
Note that 
$\Delta \Lambda_l u_j =0$ when the linear element is used. Taking $v_l=0$ in $\eta_l^*$, with a similar argument, we may obtain that 
 \begin{equation*}
         \norm{e_j}_{L^{\infty}(\Omega)}\lesssim \frac{|\ln{\underline{h}_l}|^2}{|\ln{\overline{h}_l}|}\frac{\lambda_{n+L}}{\lambda_{n+1}}\left(  \max_{T\in \mathcal{T}_l} h_T\sum_{i\in J}\Vert[\![\frac{\partial u_{l,i}}{\partial {n}}]\!]\Vert_{L^{\infty}(\partial T\backslash\partial \Omega)}\right).
    \end{equation*}
    In other words, the jump flux residuals dominate the a posterori error when $k=1$.
\end{remark}

\subsection{Efficiency}
In this subsection, we give the proof of efficiency of $\eta_l$.
\begin{lemma}[Efficiency]\label{lemma Efficiency}
 If the mesh-size $\overline{h}_l$ is small enough,   it holds that
    \begin{equation*}
       \eta_l\lesssim \sqrt{L}\sum_{j\in J}\norm{e_j}_{L^{\infty}(\Omega)}.
    \end{equation*}
\end{lemma}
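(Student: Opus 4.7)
The plan is to combine three ingredients: (i) a classical Verf\"{u}rth-style bubble-function argument adapted to $L^\infty$, applied componentwise in $i \in J$; (ii) a basis-change step expressing the resulting bound in terms of the errors $e_j$; and (iii) a Cauchy--Schwarz summation that produces the $\sqrt{L}$ factor. The main obstacle is a gap-robust $L^\infty$ control of the discrete difference $u_{l,i} - \Lambda_l u_i \in W_l$.

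For fixed $i\in J$ and $T\in\mathcal{T}_l$, choose $x_T\in T$ with $|R_T^i(x_T)|=\|R_T^i\|_{L^\infty(T)}$, where $R_T^i := \lambda_{l,i}u_{l,i}+\Delta u_{l,i}\in P_k(T)$, and take the reproducing bubble $\delta_T\in C_0^\infty(T)$ from \eqref{prodT}. Integration by parts (no boundary contribution since $\delta_T$ and $\nabla\delta_T$ vanish on $\partial T$), combined with the continuous identity $a(u_i,\delta_T)=\lambda_i(u_i,\delta_T)_T$, gives
\begin{equation*}
    R_T^i(x_T) = (R_T^i,\delta_T)_T = (\lambda_{l,i}u_{l,i}-\lambda_iu_i,\delta_T)_T + (u_{l,i}-u_i,\Delta\delta_T)_T.
\end{equation*}
Using $\|\delta_T\|_{L^1(T)}\lesssim 1$ and $\|\Delta\delta_T\|_{L^1(T)}\lesssim h_T^{-2}$ from \eqref{prodT}, the small-mesh bound $\lambda_{l,i}\overline{h}_l^2\lesssim 1$, and absorbing $h_T^2|\lambda_{l,i}-\lambda_i|$ as higher order, I obtain $h_T^2\|R_T^i\|_{L^\infty(T)}\lesssim \|u_i-u_{l,i}\|_{L^\infty(T)}$. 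An analogous edge-bubble argument on the patch $\omega_E=T_+\cup T_-$ yields the companion bound $h_T\|[\![\frac{\partial u_{l,i}}{\partial n}]\!]\|_{L^\infty(E)}\lesssim \|u_i-u_{l,i}\|_{L^\infty(\omega_E)}$.

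Next, decompose $u_i-u_{l,i} = e_i + (\Lambda_l u_i - u_{l,i})$ and expand $\Lambda_l u_i\in W_l$ as $\sum_{k\in J}c_{ik}u_{l,k}$ with $c_{ik} = \lambda_i\lambda_{l,k}^{-1}(u_i,u_{l,k})$ (obtained from Lemma \ref{lemma Lambda_l} tested against $v_l = u_{l,k}$). A Green's-function duality in the spirit of Lemma \ref{lem proper approximation}, applied to $u_{l,i}-\Lambda_l u_i$, yields the key estimate
\begin{equation*}
    \|u_{l,i}-\Lambda_l u_i\|_{L^\infty(\Omega)} \lesssim \sum_{k\in J}\|e_k\|_{L^\infty(\Omega)},
\end{equation*}
with constants depending only on the cluster ratio $\lambda_{n+L}/\lambda_{n+1}$. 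Combining with the bubble-function bounds and summing over $i\in J$,
\begin{equation*}
    \eta_l(T) \lesssim \sum_{i\in J}\|e_i\|_{L^\infty(\Omega)} + \sum_{i\in J}\sum_{k\in J}|c_{ik}|\,\|e_k\|_{L^\infty(\Omega)}.
\end{equation*}
The $L^2$-orthonormality of $\{u_{l,k}\}_{k\in J}$ together with $\|u_i\|_{L^2(\Omega)}=1$ and Parseval give $\sum_{i\in J}c_{ik}^2\leq(\lambda_{n+L}/\lambda_{n+1})^2\lesssim 1$, so Cauchy--Schwarz delivers $\sum_{i\in J}|c_{ik}|\leq \sqrt{L}(\sum_{i\in J}c_{ik}^2)^{1/2}\lesssim \sqrt{L}$; taking the maximum over $T$ finishes the proof.

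The principal obstacle is the gap-robust $L^\infty$ estimate for $u_{l,i}-\Lambda_l u_i$: naive $L^2$-type cluster arguments would introduce inverse intra-cluster-gap factors. Avoiding this requires the regularized Green's function and its $W^{2,1}$-bound from Section 3, used in duality exactly as in the proof of Lemma \ref{lem proper approximation}, so that the resulting constants depend only on $\lambda_{n+L}/\lambda_{n+1}$ and geometric data. Once this key estimate is in hand, the bubble-function manipulations and Cauchy--Schwarz step are essentially routine.
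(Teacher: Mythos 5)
Your bubble-function computation is fine as a tool, but the argument breaks at the ``key estimate'' $\Vert u_{l,i}-\Lambda_l u_i\Vert_{L^\infty(\Omega)}\lesssim\sum_{k\in J}\Vert e_k\Vert_{L^\infty(\Omega)}$, which is false for genuinely multiple eigenvalues. Your first step bounds $h_T^2\Vert\lambda_{l,i}u_{l,i}+\Delta u_{l,i}\Vert_{L^\infty(T)}$ by $\Vert u_i-u_{l,i}\Vert_{L^\infty(T)}$, i.e.\ by the distance of the discrete eigenfunction to the \emph{individually indexed} exact eigenfunction, and the key estimate is supposed to repair this. But take $\lambda_{n+1}=\lambda_{n+2}$ with eigenspace $W$: the discrete solver may return a rotated basis, e.g.\ $u_{l,n+1}\approx(u_{n+1}+u_{n+2})/\sqrt2$, while $\Lambda_l u_{n+1}=P_lR_lu_{n+1}\approx u_{n+1}$, so all $e_k\to0$ yet $\Vert u_{l,n+1}-\Lambda_l u_{n+1}\Vert_{L^\infty(\Omega)}$ stays of order one. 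The duality of Lemma \ref{lem proper approximation} cannot produce your estimate either: that proof uses $a(R_lu_j-\Lambda_lu_j,v_l)=a(u_j-\Lambda_lu_j,v_l)=\lambda_j(u_j-P_lu_j,v_l)$ via Galerkin orthogonality and Lemma \ref{lemma Lambda_l}, whereas for $u_{l,i}-\Lambda_lu_i$ one only gets $a(u_{l,i}-\Lambda_lu_i,v_l)=\lambda_{l,i}(u_{l,i},v_l)-\lambda_i(P_lu_i,v_l)$, which contains the $O(1)$ rotation $u_{l,i}-P_lu_i$ and is not small. So the per-index route is not gap-robust and the proof collapses at this point (the absorption of $h_T^2|\lambda_{l,i}-\lambda_i|$ as ``higher order'' is also left unjustified, though that is a minor issue by comparison).

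The paper avoids this by never comparing $u_{l,i}$ with $u_i$. The bubble and edge-bubble tests are applied to the residual of $\Lambda_lu_j$ with datum $\lambda_jP_lu_j$, giving $h_T^2\Vert\lambda_jP_lu_j+\Delta\Lambda_lu_j\Vert_{L^\infty(T)}\lesssim\Vert e_j\Vert_{L^\infty(\Omega)}$ and $h_E\Vert[\![\frac{\partial\Lambda_lu_j}{\partial n}]\!]\Vert_{L^\infty(E)}\lesssim\Vert e_j\Vert_{L^\infty(\Omega)}$, where the extra $L^2$-term $\Vert u_j-P_lu_j\Vert_{L^2(\Omega)}$ is absorbed through Lemma \ref{lemma u-Lambda_l u}. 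Only afterwards does one pass to the computable residuals of the $u_{l,i}$, and only \emph{collectively}: Lemma 1 of \cite{MR3532806} bounds $\sum_{j\in J}\Vert\lambda_{l,j}u_{l,j}+\Delta u_{l,j}\Vert_{L^2(T)}^2$ by $\sum_{j\in J}\Vert\lambda_jP_lu_j+\Delta\Lambda_lu_j\Vert_{L^2(T)}^2$ (and similarly for the jumps), exploiting that both families span $W_l$ with a change of basis that is uniformly well conditioned in the summed sense; combined with H\"older and inverse inequalities for polynomials this gives $\eta_l(T)^2\lesssim L\sum_{j\in J}\Vert e_j\Vert_{L^\infty(\Omega)}^2$ and hence the $\sqrt L$ factor. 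Your reproducing-kernel bubble from \eqref{prodT} and the Cauchy--Schwarz bookkeeping with the coefficients $c_{ik}$ are acceptable ingredients, but the basis change must be performed at the level of summed residuals of the two families in $W_l$, not through a pointwise comparison of $u_{l,i}$ with $u_i$ or $\Lambda_lu_i$, which is exactly the quantity that the cluster setting leaves uncontrolled.
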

\begin{proof}
We modify the local argument in \cite{1,i20} and use some results in \cite{MR3532806}.  Let $b_T$ be the usual bubble function, i.e., the polynomial of degree three, obtained by the product of the barycentric coordinates of $T$. Note that (see \cite{i20})
\begin{equation*}
     \Vert b_T\Vert_{W^{s,1}(T)}\lesssim h_T^{2-s}.
\end{equation*}
Integrating by parts, for any $v\in V$, we have
    \begin{equation*}
        a(e_j,v)=(e_j,-\Delta v)+\sum_{T\in \mathcal{T}_l}\langle e_j,\frac{\partial v}{\partial n}\rangle_{\partial T}
    \end{equation*}
and
\begin{equation*}
     a(e_j,v)=(\lambda_j u_j+\Delta \Lambda_lu_j,v)+\sum_{T\in \mathcal{T}_l}\langle v,\frac{\partial e_j}{\partial n}\rangle_{\partial T}.
\end{equation*}
Taking $v=(\lambda_j P_lu_j+\Delta\Lambda_lu_j)b_T$ and using the trace theorem, we get
\begin{align}\label{efficiency 1}
\begin{aligned}
   h_T^2\norm{\lambda_j P_lu_j+\Delta\Lambda_lu_j}_{L^{\infty}(T)}&\lesssim h_T\norm{u_j-P_lu_j}_{L^2(T)}+\norm{e_j}_{L^{\infty}(T)}\\
    &\lesssim h_T\norm{u_j-\Lambda_lu_j}_{L^2(\Omega)}+\norm{e_j}_{L^{\infty}(T)}\\
    &\lesssim \norm{e_j}_{L^{\infty}(\Omega)}.    
\end{aligned}
\end{align}
 Given an interior edge $E$, we denote by $T_1$ and $T_2$ the two elements that share the edge $E$. Let $\varphi_E$ be a piecewise quadratic polynomial on $T_1\cup T_2$, which is one at the midpoint of $E$ and vanishes on the other edges of $T_1\cup T_2$. Taking $v=[\![\frac{\partial \Lambda_l u_j}{\partial {n}}]\!] \varphi_E$, with a similar argument as in proving  \eqref{efficiency 1}, we obtain
 \begin{equation} \label{efficiency 2}
     h_{E}\norm{[\![\frac{\partial \Lambda_l u_j}{\partial {n}}]\!]}_{L^{\infty}(E)}\lesssim \norm{e_j}_{L^{\infty}(\Omega)}.
 \end{equation}
 Owing to  Lemma 1 in \cite{MR3532806}, we have
\begin{equation*}
\sum_{j\in J}\norm{\lambda_{l,j}u_{l,j}+\Delta u_{l,j}}_{L^2(T)}^2\lesssim    \sum_{j\in J}\norm{\lambda_jP_lu_j+\Delta\Lambda_lu_j}_{L^2(T)}^2\ 
\end{equation*}
for any $T\in \mathcal{T}_l$, and 
\begin{equation*}
\sum_{j\in J}\norm{[\![\frac{\partial u_{l,j}}{\partial {n}}]\!]}_{L^{2}(E)}^2  \lesssim  \sum_{j\in J}\norm{[\![\frac{\partial \Lambda_l u_j}{\partial {n}}]\!]}_{L^{2}(E)}^2
\end{equation*}
for any interior edge $E$. Using the H{\"o}lder inequality and the stand inverse inequality, and combining \eqref{efficiency 1} and \eqref{efficiency 2}, we deduce 
\begin{equation*}
    {\eta_l^2(T)}\lesssim L\sum_{j\in J} \norm{e_j}_{L^{\infty}(\Omega)}^2\lesssim L(\sum_{j\in J}\norm{e_j}_{L^{\infty}(\Omega)})^2,
\end{equation*}
which gives the efficiency of $\eta_l$ in view of $\eta_l=\max_{T\in \mathcal{T}_l}\eta_l(T)$.
\end{proof}

 \section{Numerical experiments}
In this section, we present some benchmarks of numerical experiments in non-convex domains where some eigenfunctions have singularities. Specifically, we approximate \eqref{Laplace} in different domains with the linear conforming element by the adaptive algorithm proposed in section 4.1. We set the bulk parameter $\theta=0.5$ in all adaptive computations. 

\subsection{Approximation on L-shaped domain}
We start our numerical experiments with the approximation in L-shaped domain 
\begin{equation*}
    \Omega_1=(0,1)^2 \backslash ([\frac{1}{2},1)\times (0,\frac{1}{2}]).
\end{equation*}
The initial mesh $\mathcal{T}_0$ is an uniform mesh with $h_T=|T|^{\frac{1}{2}}=\frac{\sqrt{2}}{16}$ for all elements. By the adaptive algorithm, it may be seen that $\lambda_{12}$ is close to $\lambda_{13}$ and the eigenfunction $u_{12}$ (associated with $\lambda_{12}$) has local singularities. Thus, in this experiment, we take $J=\{12,13\}$. Figure \ref{fig:L1mesh} shows the adaptive mesh with 19700 elements, from which we can see that our
adaptive algorithm captures the singularities especially around the reentrant corner accurately. Figure \ref{fig:L1rate} shows that the convergence rate of $\eta_l$ is $O(N^{-1})$, which is quasi-optimal. Moreover, from Table \ref{tab:L} we may see that the product
of $N_l$ and $\eta_l$ is gradually stable around a certain constant, which illustrates that $\eta_l$ is $O(N^{-1})$.

\begin{figure}[H]
\centering
\begin{minipage}[t]{0.48\textwidth}
\centering
\includegraphics[width=6cm,height=4cm]{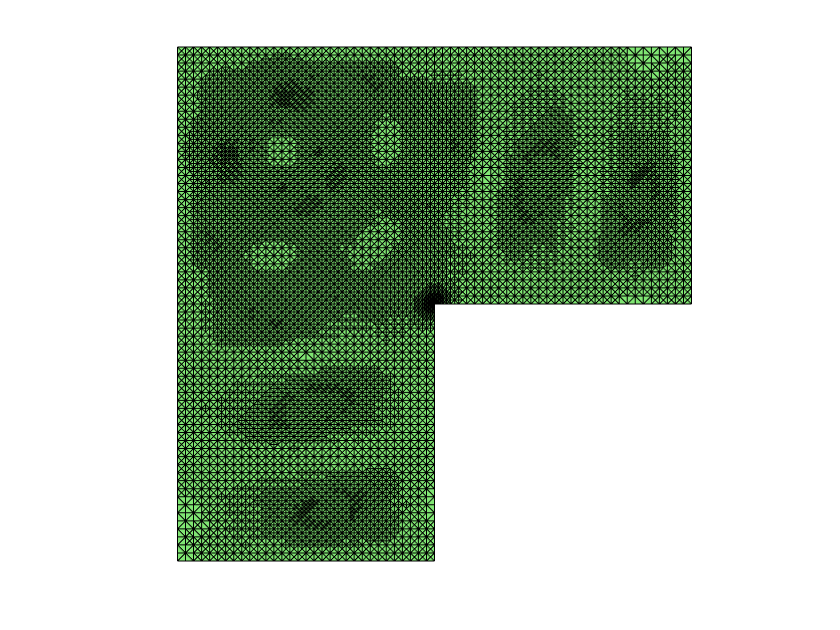}
\caption{The adaptive mesh with $19700$ elements in $\Omega_{1}$ for the 12th and 13th eigenpairs.}\label{fig:L1mesh}
\end{minipage}
\hspace{0.15in}
\begin{minipage}[t]{0.48\textwidth}
\centering
\includegraphics[width=6cm,height=4cm]{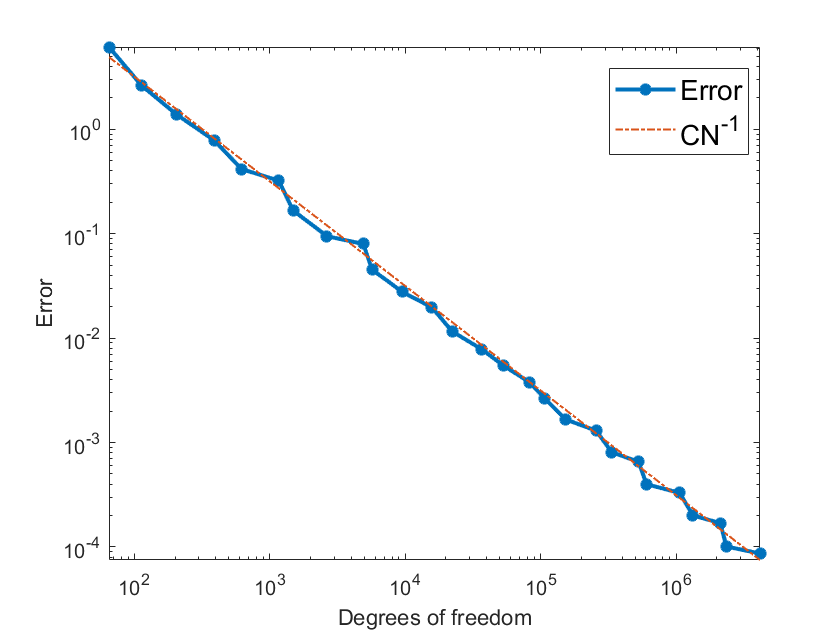}
\caption{The blue dots represent the pointwise a posteriori error estimator $\eta_l$ for $J=\{12,13\}$ in $\Omega_1$. The dotted line represents the fitted convergence rate.}\label{fig:L1rate}
\end{minipage}
\end{figure}

\begin{table}[H]
    \centering    
    \begin{tabular}{cccc}\hline
       \textbf{mesh level}  & \textbf{degrees of freedom}&\textbf{a posteriori error}&\textbf{product} \\
        $l$& $N_l$& $\eta_l$&$ N_l\times \eta_l$\\
        \hline

0 & 33 & 37.768&   1246.3\\
4 & 469 &  2.2696&     1064.4\\
8& 2625 & 0.47195 & 1251.6\\
12&  7508 &  0.17591 &   1320.7\\
16& 18981 &    0.068936&    1308.5\\
 \hline
    \end{tabular}
    \caption{Degrees of freedom, the a posteriori error and their product in L-shape domain $\Omega_1$ for 12th and 13th eigenpairs on different mesh levels.}
    \label{tab:L}
\end{table}

\par Next, we present some numerical results to illustrate that when we limit the a prior error in the sense of $L^{\infty}$-norm in practical applications, it is better to use the $L^{\infty}$ error estimator. To show this, we introduce the $H^{1}$ error estimator defined in \cite{MR3347459}
\begin{equation*}
   \eta_l^{a}(T)=\left(\sum_{j\in J} h_T^2\norm{\lambda_{l,j}u_{l,j}}_{L^2(T)}^2+\sum_{j\in J}h_T\norm{[\![\frac{\partial u_{l,j}}{\partial n}]\!]}^2_{L^2(\partial T \backslash \partial\Omega)} \right)^{\frac{1}{2}} 
\end{equation*}
and
\begin{equation*}
    \eta_l^{a}=(\sum_{T\in \mathcal{T}_l}  \eta_l^{a}(T)^2)^{\frac{1}{2}}.
\end{equation*}

\begin{figure}[H]
\centering
\begin{minipage}[t]{0.48\textwidth}
\centering
\includegraphics[width=6cm,height=4cm]{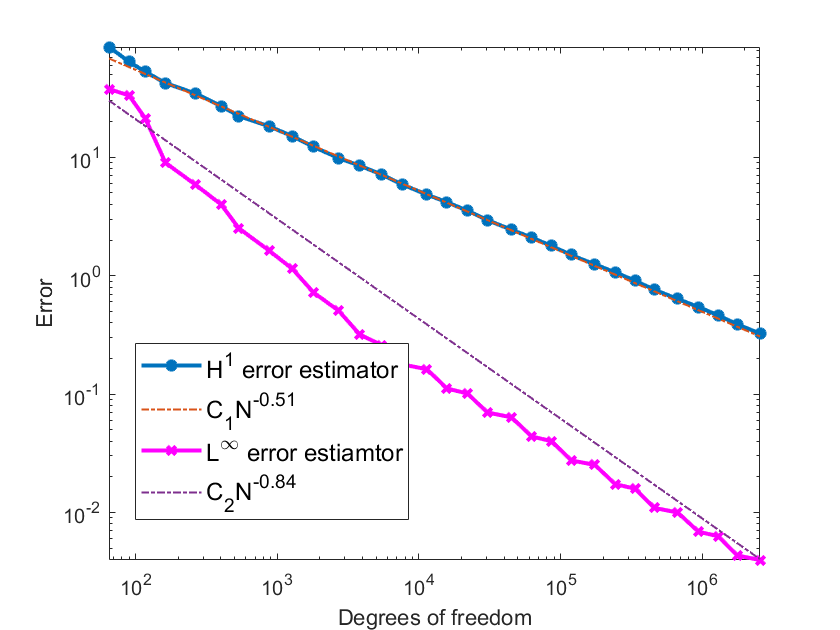}
\caption{The convergence rates of two error estimators when using the $H^{1}$ error estimator.}\label{fig:Lcom1}
\end{minipage}
\hspace{0.15in}
\begin{minipage}[t]{0.48\textwidth}
\centering
\includegraphics[width=6cm,height=4cm]{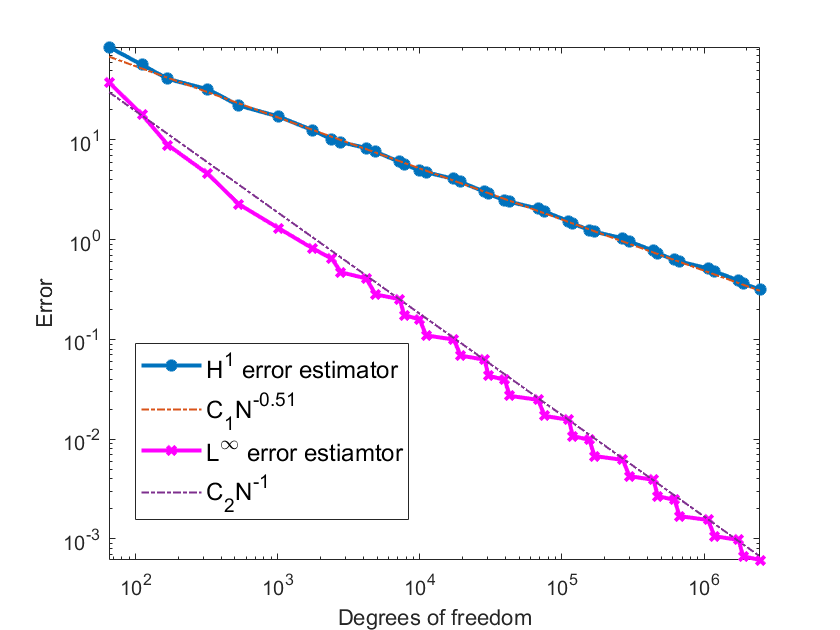}
\caption{The convergence rates of two error estimators when using the $L^{\infty}$ error estimator.}\label{fig:Lcom2}
\end{minipage}
\end{figure}

\par When we use the $H^1$ error estimator, we rely on D{\"o}rfler marking in \textbf{Mark}. First we use the $H^1$ error estimator in the adaptive algorithm. In each iteration, we also record the value of the $L^{\infty}$ error estimator (it may be seen as the a priori $L^{\infty}$ error in view of the reliability  and efficiency of the $L^{\infty}$ error estimator). Figure \ref{fig:Lcom1} shows that when the $H^1$ error estimator is used, the convergence rate of the $L^{\infty}$ error estimator becomes suboptimal. In other words, we need more degrees of freedom to limit the a priori error in the sense of $L^{\infty}$-norm. Then, we use the $L^{\infty}$ error estimator as the a posteriori error estimator in the adaptive algorithm. We also record the value of the $H^1$ error estimator in each iteration (it may be seen as the a priori $H^1$ error). Figure \ref{fig:Lcom2} shows that  the convergence rate of the $H^1$ error estimator keeps optimal. This experiment illustrates that we may get the optimal convergence rate of the a priori error in the sense of energy norm by using the $L^{\infty}$ error estimator.

\subsection{Approximation on a symmetric slit domain}
\par In this subsection, numerical experiments are set in a square domain with four symmetric slits defined as
\begin{align*}
    \Omega_2=&(-1,1)^2\backslash 
    ( \text{conv}\{(0.5, 0), (1, 0)\} \cup \text{conv}\{(0, 0.5), (0, 1)\} \\
    &\cup \text{conv}\{(-0.5, 0), (-1, 0)\} \cup \text{conv}\{(0, -0.5), (0, -1)\} ),
\end{align*}
where ${\rm conv}\{(a_1,b_1),(a_2,b_2)\}$ denotes the line segment with $(a_1,b_1)$ and $(a_2,b_2)$ as the endpoints.

\begin{figure}[H]
\centering
\begin{minipage}[t]{0.48\textwidth}
\centering
\includegraphics[width=6cm,height=4cm]{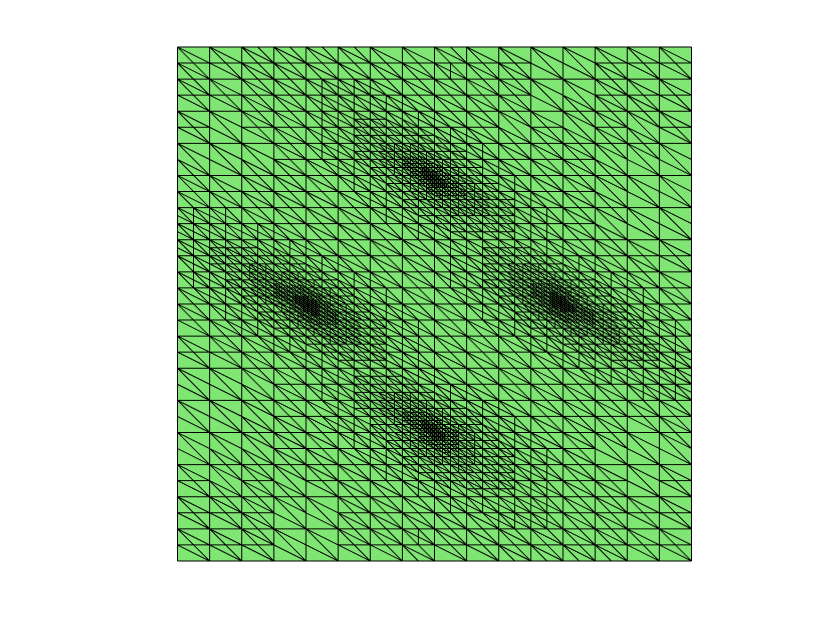}
\caption{The adaptive mesh with $5373$ elements in $\Omega_{2}$ for the 2nd and 3rd eigenpairs.}\label{fig:C1mesh}
\end{minipage}
\hspace{0.15in}
\begin{minipage}[t]{0.48\textwidth}
\centering
\includegraphics[width=6cm,height=4cm]{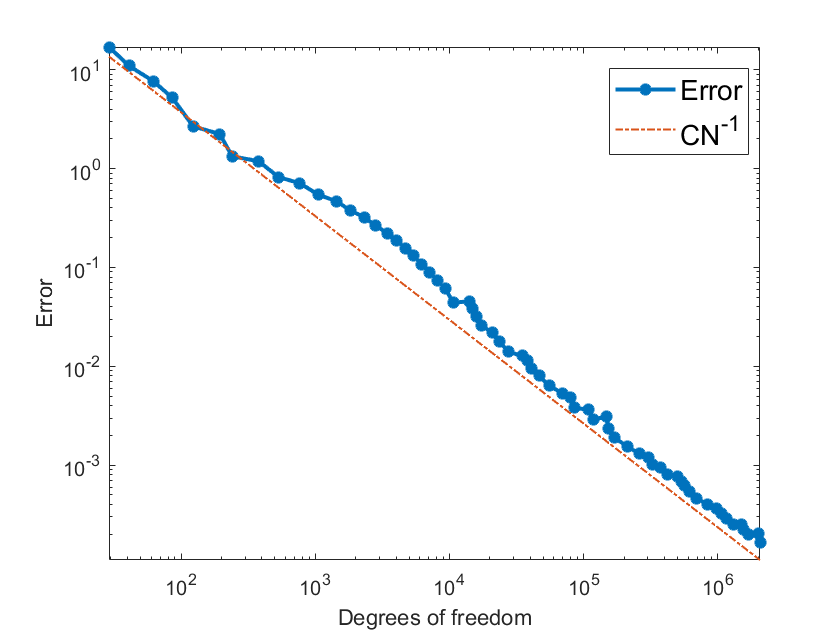}
\caption{The blue dots represent the pointwise a posteriori error estimator $\eta_l$ for $J=\{2,3\}$ in $\Omega_2$. The dotted lines represent the fitted convergence rate.}\label{fig:C1rate}
\end{minipage}
\end{figure}

\par It is known that the interior eigenvalues $\lambda_2=\lambda_3$ (see \cite{MR3347459}). 
We restrict our attention on computations for the second and third eigenpairs, i.e., $J=\{2,3\}$.
Figure \ref{fig:C1mesh} shows the adaptive mesh with 5373 elements, which illustrates that our
adaptive algorithm captures the  singularities around four slits accurately. Figure \ref{fig:C1rate} shows that the convergence rate of the pointwise a posteriori error estimator $\eta_l$ is quasi-optimal. This experiment illustrates the pointwise a poseteriori error estimator $\eta_l$ is still reliable and efficiency for multiple eigenvalues.

\subsection{Approximation on a perturbed slit domain}
In this subsection, we study a perturbation of the geometry and show that the refinement strategy may exert a significant impact on the performance of the performance of the entire adaptive algorithm. Specifically, we consider the slit domain 
\begin{align*}
    \Omega_3=&(-1,1)^2\backslash 
    ( \text{conv}\{(0.505, 0), (1, 0)\} \cup \text{conv}\{(0, 0.501), (0, 1)\} \\
    &\cup \text{conv}\{(-0.499, 0), (-1, 0)\} \cup \text{conv}\{(0, -0.5), (0, -1)\} ) .
\end{align*}

\begin{figure}[H]
\centering
\begin{minipage}[t]{0.48\textwidth}
\centering
\includegraphics[width=6cm,height=4cm]{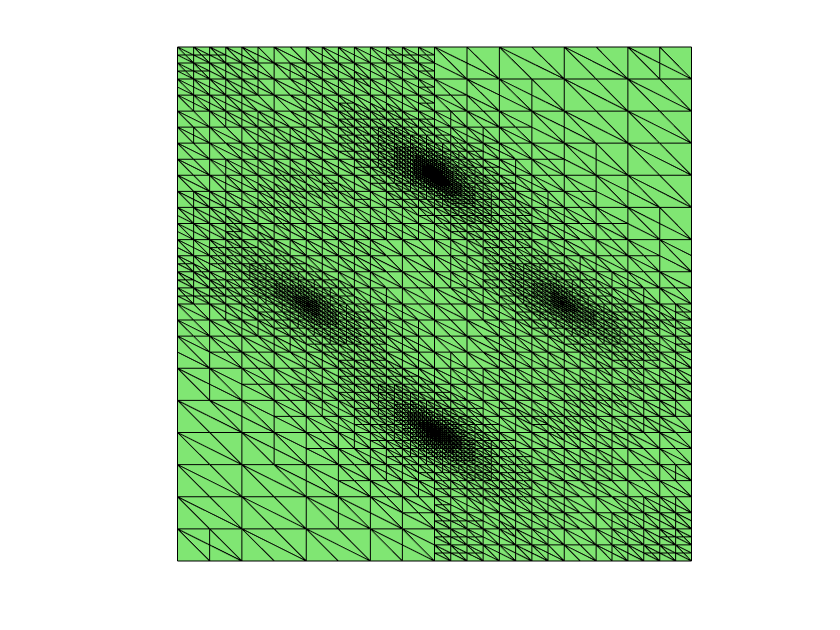}
\caption{The adaptive mesh with $5852$ elements in $\Omega_3$ for the 2nd  eigenpair by using the BiSecLG(1) refinement strategy.}\label{fig:Cwuran_Br_mesh}
\end{minipage}
\hspace{0.15in}
\begin{minipage}[t]{0.48\textwidth}
\centering
\includegraphics[width=6cm,height=4cm]{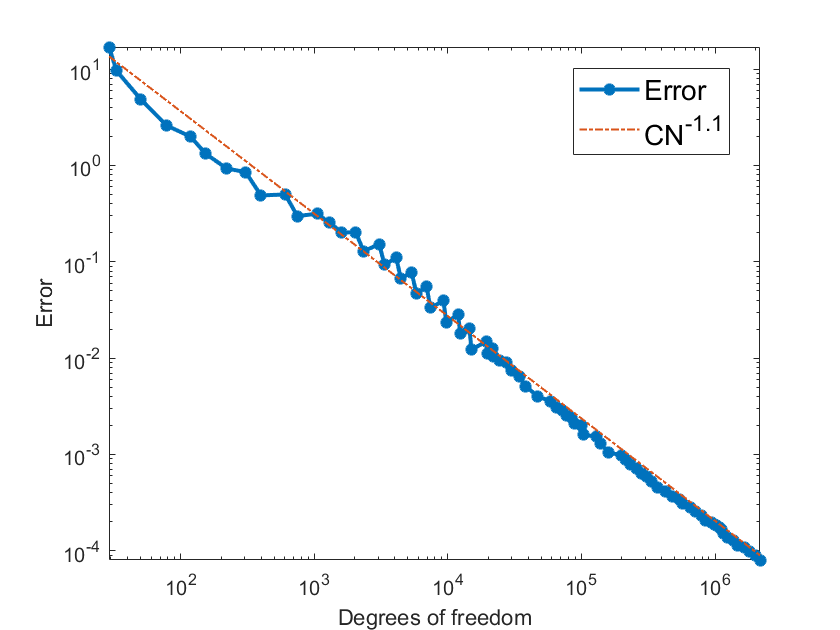}
\caption{The blue dots represent the pointwise a posteriori error estimator $\eta_l$ for $J=\{2\}$ in $\Omega_3$. The dotted lines represent the fitted convergence rate.}\label{fig:Cwuran_Br_rate}
\end{minipage}
\end{figure}

\begin{figure}[H]
\centering
\begin{minipage}[t]{0.48\textwidth}
\centering
\includegraphics[width=6cm,height=4cm]{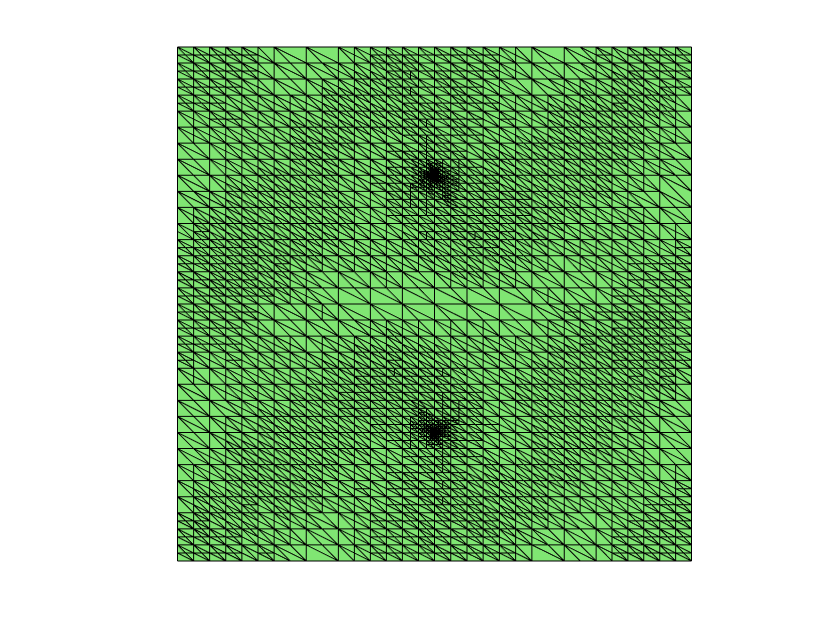}
\caption{The adaptive mesh with $5858$ elements in $\Omega_3$ for the 2nd eigenpair by using the newest vertex refinement strategy.}\label{fig:Cwuran_newest_mesh}
\end{minipage}
\hspace{0.15in}
\begin{minipage}[t]{0.48\textwidth}
\centering
\includegraphics[width=6cm,height=4cm]{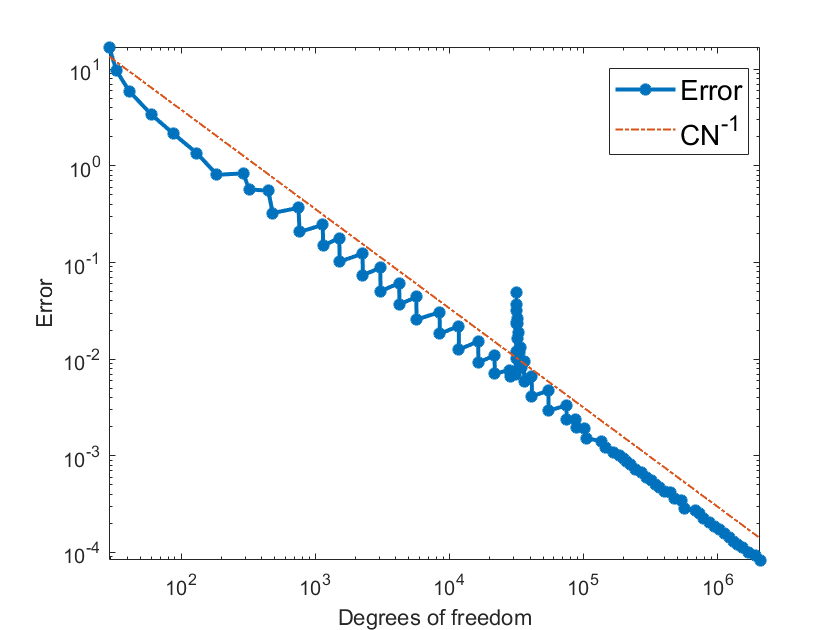}
\caption{The blue dots represent the pointwise a posteriori error estimator $\eta_l$ for $J=\{2\}$ in $\Omega_3$. The dotted lines represent the fitted convergence rate.}\label{fig:Cwuran_newest_rate}
\end{minipage}
\end{figure}

\par  
We observed an interesting phenomenon when applying the adaptive finite element method to solve clustered eigenvalue problems with different refinement strategies in Figures \ref{fig:Cwuran_Br_mesh} and \ref{fig:Cwuran_newest_mesh}. When we only choose $J=\{2\}$ and use the adaptive algorithm in section 4, we may see that  the singularities around four slits are all captured, and the convergence of $\eta_l$ is optimal (as shown in Figures \ref{fig:Cwuran_Br_mesh} and \ref{fig:Cwuran_Br_rate}). However, if we use the newest vertex refinement strategy, only two singularities are captured, and the convergence history of $\eta_l$ seems problematic when degrees of freedom within the range of $10^4$ to $10^5$ (as shown in Figures \ref{fig:Cwuran_newest_mesh} and \ref{fig:Cwuran_newest_rate}). When we choose $J=\{2,3\}$, i.e, a whole cluster of eigenvalues, this problem is cured. No matter whether we use the newest vertex refinement or BiSecLG(1) refinement, the adaptive algorithm captures the singularities around four slits (Figures \ref{fig:C22mesh} and \ref{fig:C2wuran_new_mesh}), and the convergence rate of $\eta_l$ keeps optimal as shown in Figures \ref{fig:C22rate} and \ref{C2wuran_new_rate}. This comparative experiment explains why our adaptive algorithm is more robust in the sense that it only requires some gap conditions of the cluster from the remaining spectrum, but no gap within the cluster.  And this experiment demonstrates that we should choose a whole cluster to construct  the a posteriori error estimator in practical applications.

\begin{figure}[H]
\centering
\begin{minipage}[t]{0.48\textwidth}
\centering
\includegraphics[width=6cm,height=4cm]{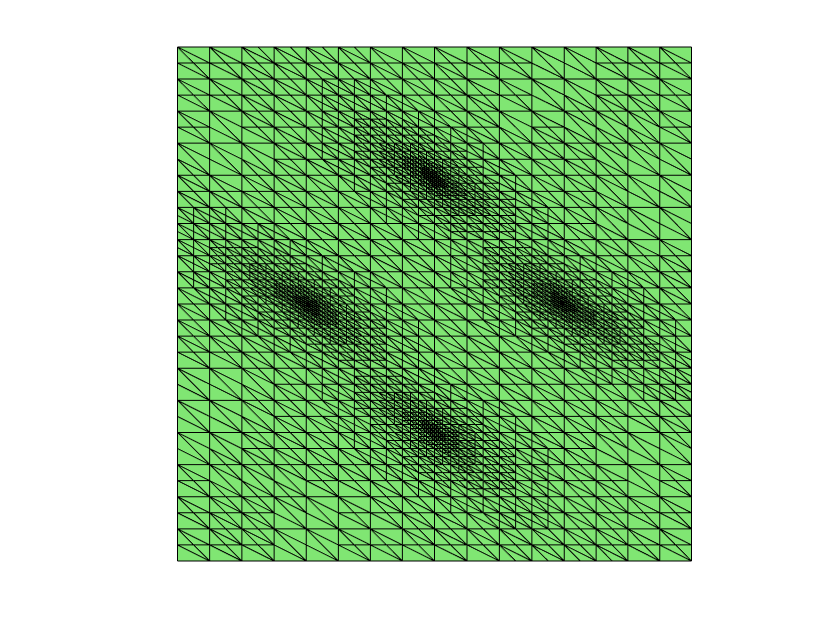}
\caption{The adaptive mesh with $5304$ elements in $\Omega_3$ for the 2nd and 3rd eigenpairs by using the BiSecLG(1) refinement strategy.}\label{fig:C22mesh}
\end{minipage}
\hspace{0.15in}
\begin{minipage}[t]{0.48\textwidth}
\centering
\includegraphics[width=6cm,height=4cm]{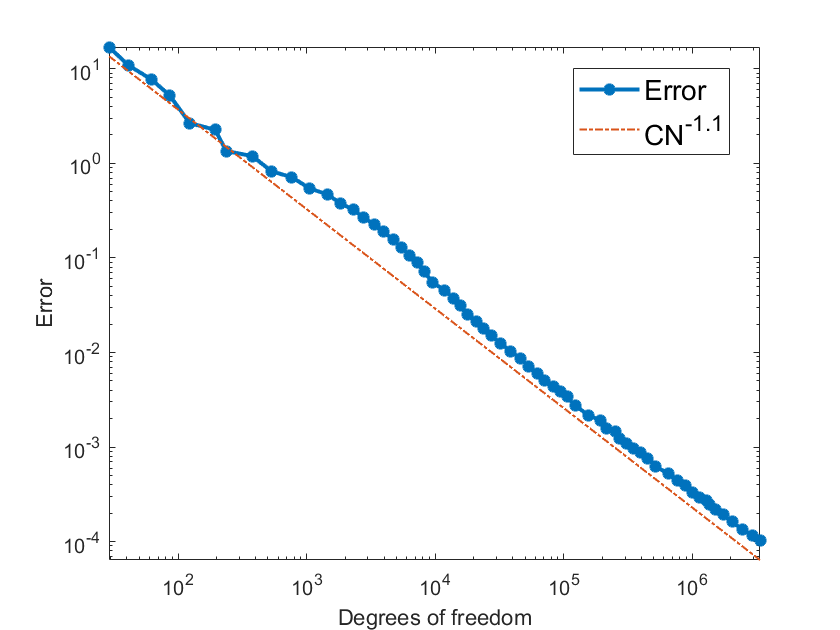}
\caption{The blue dots represent the pointwise a posteriori error estimator $\eta_l$ for $J=\{2,3\}$ in $\Omega_3$. The dotted lines represent the fitted convergence rate.}\label{fig:C22rate}
\end{minipage}
\end{figure}

\begin{figure}[H]
\centering
\begin{minipage}[t]{0.48\textwidth}
\centering
\includegraphics[width=6cm,height=4cm]{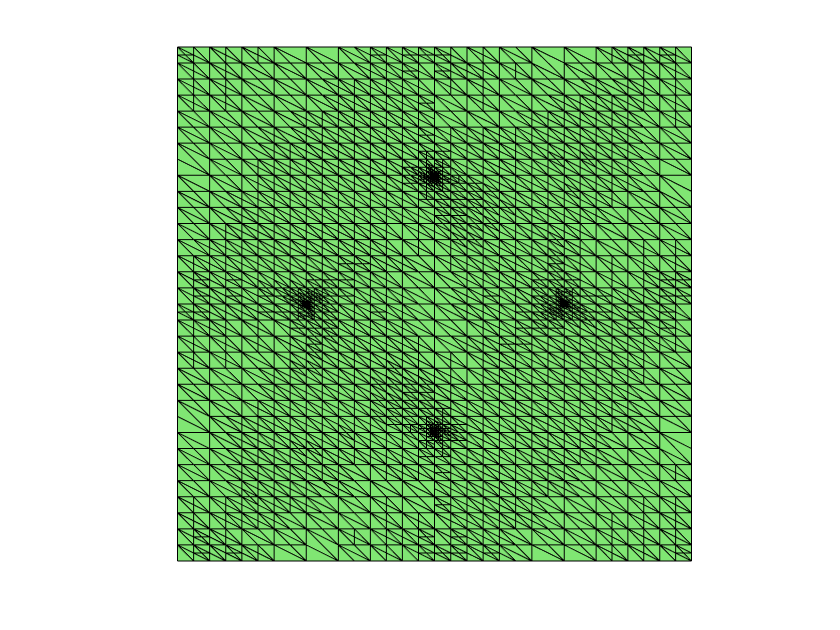}
\caption{The adaptive mesh with $4251$ elements in $\Omega_3$ for the 2nd and 3rd eigenpairs by using the newest vertex refinement  strategy.}\label{fig:C2wuran_new_mesh}
\end{minipage}
\hspace{0.15in}
\begin{minipage}[t]{0.48\textwidth}
\centering
\includegraphics[width=6cm,height=4cm]{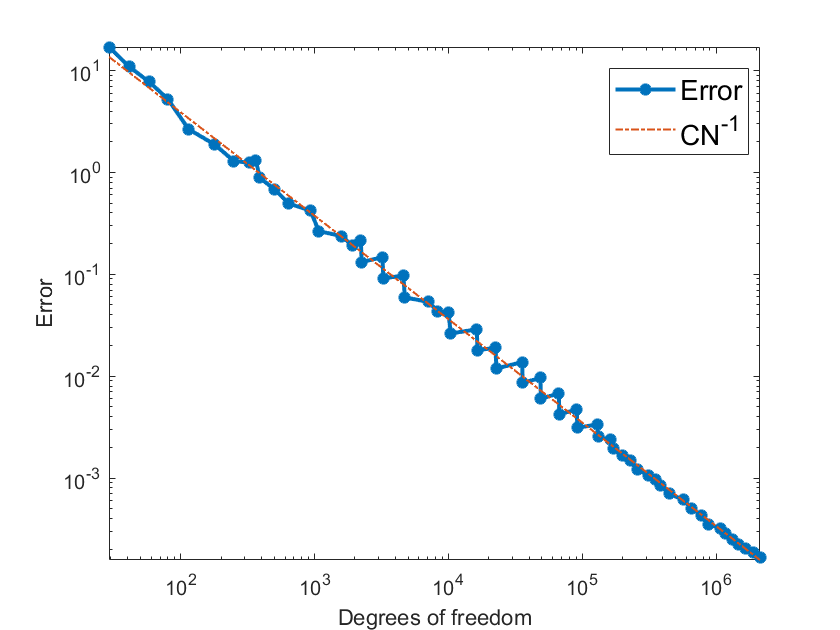}
\caption{The blue dots represent the pointwise a posteriori error estimator $\eta_l$ for $J=\{2,3\}$ in $\Omega_3$. The dotted lines represent the fitted convergence rate.}\label{C2wuran_new_rate}
\end{minipage}
\end{figure}

\section{Conclusion}
In this paper, we propose and analyze a pointwise a posteriori error estimator for conforming
finite element approximations of eigenvalue clusters of second-order elliptic eigenvalue problems with AFEMs. It is proven that the pointwise a posteriori error estimator is reliable and efficient, which is also extended to  higher order elements with the help of the weighted Sobolev stability of the $L^2$- projection. Specially, jump residuals dominate the total a posteriori error when the linear element is used. Numerical results verify our theoretical findings.

\begin{small}
\bibliographystyle{plain}
\bibliography{references}
\end{small}
\end{document}